\begin{document}
\titlerunning{Periodic generalized function algebras}
\title*{Generalized Function Algebras Containing  Spaces of Periodic Ultradistributions}
\author{Andreas Debrouwere}
\institute{Andreas Debrouwere,  Department of Mathematics, Ghent University, Krijgslaan 281 Gebouw S22, 9000 Gent, Belgium, 
\email{Andreas.Debrouwere@Ugent.be} \newline Supported by Ghent University, through a BOF Ph.D.-grant.}
\maketitle
\vspace{-2cm}\abstract{We construct differential algebras in which spaces of (one-dimensional) periodic ultradistributions are embedded. By proving a Schwartz impossibility type result, we show that our embeddings are optimal in the sense of being consistent with the pointwise multiplication of ordinary functions. In particular, we embed the space of hyperfunctions on the unit circle into a differential algebra in such a way that the multiplication of real analytic functions on the unit circle coincides with their pointwise multiplication. Furthermore, we introduce a notion of regularity in our newly defined algebras and show that  an embedded ultradistribution is regular if and only if it is an ultradifferentiable function.}

\section{Introduction}
Differential algebras containing the space of distributions were first introduced and studied by J.F. Colombeau \cite{Colombeau84, Colombeau85}. It was the starting point of the by now well-established nonlinear theory of generalized functions. For a clear exposition of this theory and its applications to various branches of analysis we refer to the monographs \cite{GGK, Ober}. 

In this paper we contribute to this program by developing a nonlinear theory of (one-dimensional) periodic ultradistributions. The problem of embedding the space of hyperfunctions on the unit circle \cite{Morimoto} into a differential algebra has attracted the attention of various authors \cite{Delcroix2,Delcroix,Valmorin}. However, the embeddings proposed so far do not preserve the multiplication of all real analytic functions. In this article we construct a new algebra, containing the space of hyperfunctions on the unit circle, which does enjoy this property and we show that it is optimal in this respect. In fact, we consider the aforementioned embedding problem for general classes of periodic ultradistributions, both of Beurling and Roumieu type, defined via a weight sequence $M_p$ on which only very mild growth conditions are imposed. The Roumieu case of $M_p= p!$ corresponds with the hyperfunction case. 

In \cite{Delcroix2} (see also \cite{Delcroix}) a differential algebra containing the space of hyperfunctions on the unit circle (the topological dual of the space of real analytic functions on the unit circle) is constructed and it is shown that the multiplication of functions belonging to the quasianalytic class defined via the weight sequence $p!^s$, for some $0 < s <1$, is preserved. A comparison with the situation for classical distributions (the topological dual of the space of compactly supported smooth functions) shows that this is not the most optimal result one would presume to be true. Namely, recall that Schwartz' impossibility result \cite{Schwartz}  asserts there is no associative and commutative differential algebra, containing the space of distributions, in which the multiplication of $k$-times differentiable functions, $k \in \mathbb{N}$, coincides with their pointwise multiplication. Nonetheless, in the Colombeau algebra the multiplication of smooth functions is preserved. In analogy to Colombeau's construction, it is natural to expect that it is possible to embed the space of hyperfunctions on the unit circle into a differential algebra in such a way that the multiplication of real analytic functions is preserved and, moreover, that such an embedding is optimal. In this paper, we construct such an algebra and show its optimality. 

Stated in terms of a general weight sequence $M_p$, our results may be summarized as follows: We show that it is possible to embed the space of periodic ultradistributions of class $M_p$ into a differential algebra in such a way that the multiplication of periodic ultradifferentiable functions of class $M_p$ is preserved. By establishing an analogue of Schwartz' impossibility result for periodic ultradistributions, we show that our embedding is optimal.
Furthermore, we introduce a notion of regularity in our algebras of periodic generalized functions and show that an embedded ultradistribution is regular if and only if it is an ultradifferentiable function.

Finally, we would like to remark that in a forthcoming paper \cite{DVV}, jointly with H. Vernaeve and J. Vindas, a nonlinear theory for (non-quasianalytic) ultradistributions on the $n$-dimensional Euclidean space is developed (see also the earlier works \cite{Benmeriem, Gramchev, PilScar}). There we construct a sheaf of differential algrebras  in which the sheaf of ultradistributions of class $M_p$ is embedded and the multiplication of ultradifferentiable functions of class $M_p$ is preserved. Moreover, we define a notion of regularity and lay the ground for microlocal analysis in these algebras. The results in the present paper are the natural counterparts of these results in the periodic case.

This paper is organized as follows. In Sect. \ref{section-preliminaries} we introduce certain spaces of periodic ultradifferentiable functions and ultradistributions, and recall their characterization in terms of Fourier coefficients \cite{Gorbagcuk,Petzsche,Pilipovic1}. Section \ref{section-structuretheorem} is devoted to an analogue of Komatsu's second structure theorem for periodic ultradistributions, this result is used to prove the Schwartz impossibility type result for periodic ultradistributions in Sect. \ref{section-impossibility}. The construction of our algebras is given in Sect. \ref{section-algebra}. Furthermore, we give an alternative projective description of the algebras of Roumieu type (see \cite{Komatsu3, Pil94} for analogues in the theory of ultradistributions) and provide a null characterization of the so called space of negligible elements, which we apply to obtain a pointwise characterization of our generalized functions. We remark that the definition of our algebras containing the space of hyperfunctions on the unit circle significantly differs from the one considered in \cite{Delcroix2,Delcroix,Valmorin}. 
The embedding of spaces of periodic ultradistributions of class $M_p$ into our algebras is discussed in Sect. \ref{section-embedding}. As usual, this will be achieved by means of convolution with a suitable mollifier sequence. We also show that the multiplication of ultradifferentiable functions of class $M_p$ is preserved. It should be pointed out that in the Roumieu case this result is valid precisely because of the (different) definition of our algebras. In the last section we study regularity in our algebra of periodic generalized functions and show that an embedded ultradistribution is regular if and only if it is an ultradifferentiable function of class $M_p$. 

\section{Spaces of Periodic Ultradifferentiable Functions \newline and Their Duals}\label{section-preliminaries}
In this preliminary section we introduce test function spaces of periodic ultradifferentiable functions and recall the well known characterization of these spaces 
and their duals by means of Fourier coefficients \cite{Gorbagcuk,Petzsche,Pilipovic1}.  For the sake of completeness, we discuss these results in quite some detail and provide proofs.
We shall work with the notion of ultradifferentiability  through weight sequences 
\cite{Komatsu1}. Fix a positive weight sequence $(M_p)_{p \in \mathbb{N}}$ with $M_0 = 1$. We always assume the following conditions 
on $M_p$:
\begin{enumerate}
\item [] $(M.1)$\, $M^{2}_{p}\leq M_{p-1}M_{p+1}\;,$  $p \in \mathbb{Z}_+$\;,
\item [] $(M.2)$\, $\displaystyle M_{p+q}\leq A H^{p+q} M_{p} M_{q}$\;, $p,q\in\mathbb{N}$\;, for some $A,H \geq 1$\;.
\end{enumerate}
The \emph{associated function} of $M_p$ is defined as
$$
M(t):=\sup_{p\in\mathbb{N}}\log\frac{t^p}{M_p}\;,\qquad t > 0\;.
$$
We extend $M$ to the whole real line by setting $M(t) = M(|t|)$ for $t \in \mathbb{R}$.
\begin{lemma}\cite[Prop.\ 3.6]{Komatsu1}\label{assumptionM2}
Let $A,H$ be the constants occurring in $(M.2)$. Then
$$
2M(t) \leq M(Ht) + \log A\;, \qquad t > 0\;.
$$
\end{lemma}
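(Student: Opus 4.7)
The plan is to unfold the definition of the associated function, rewrite $2M(t)$ as a double supremum, and then use $(M.2)$ directly to collapse this double supremum back into a single one that matches $M(Ht)$, picking up the factor $\log A$ along the way.

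First I would note that since $M_0 = 1$ and the supremum in the definition of $M(t)$ includes $p = 0$ (which contributes $\log 1 = 0$), we have $M(t) \geq 0$ for all $t > 0$, so that the sum $M(t) + M(t)$ is unambiguously defined and equals
\[
2M(t) \;=\; \sup_{p \in \mathbb{N}} \log \frac{t^p}{M_p} \;+\; \sup_{q \in \mathbb{N}} \log \frac{t^q}{M_q} \;=\; \sup_{p,q \in \mathbb{N}} \log \frac{t^{p+q}}{M_p M_q}.
\]
This rewriting is the key move: it turns a product of suprema into a single supremum over pairs $(p,q)$.

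Next I would apply $(M.2)$, which says $M_p M_q \geq M_{p+q}/(A H^{p+q})$. Substituting this into the expression under the logarithm and pulling out the constant $\log A$ yields
\[
\log \frac{t^{p+q}}{M_p M_q} \;\leq\; \log \frac{A H^{p+q} t^{p+q}}{M_{p+q}} \;=\; \log A + \log \frac{(Ht)^{p+q}}{M_{p+q}}.
\]
Taking the supremum over $p, q \in \mathbb{N}$ and using the fact that $\{p + q : p, q \in \mathbb{N}\} = \mathbb{N}$ (any $n \in \mathbb{N}$ is of the form $n + 0$), the right-hand side becomes $\log A + \sup_{n \in \mathbb{N}} \log((Ht)^n/M_n) = \log A + M(Ht)$, which is the desired inequality.

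I do not anticipate any real obstacle: the argument is a one-line manipulation once $2M(t)$ is written as a double supremum. The only minor subtlety is checking that both $M(t)$ and $M(Ht)$ are nonnegative (so that formal arithmetic with the possibly infinite value $+\infty$ causes no issue) and that the index set $\{p+q\}$ exhausts $\mathbb{N}$, both of which are immediate.
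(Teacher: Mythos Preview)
Your argument is correct and is precisely the standard proof of this inequality. The paper itself does not supply a proof of this lemma at all: it merely quotes the result from \cite[Prop.\ 3.6]{Komatsu1}, so there is no in-paper proof to compare against. Your write-up would serve as a perfectly adequate self-contained replacement for that citation.
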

As usual, the relation $M_p\subset N_p$ between two weight sequences means that there are $C>0$ and $h>0$ such 
that $M_p\leq Ch^{p}N_{p}$ for all $p\in\mathbb{N}$. The stronger relation $M_p\prec N_p$ means that the latter 
inequality remains valid for every $h>0$ and a suitable $C=C_{h}>0$.
A function $f$ defined on $\mathbb{R}$ is said to be $2\pi$-periodic if
$$
f(t + 2\pi) = f(t), \qquad t \in \mathbb{R}.
$$
  We write $\mathcal{E}_{2\pi}^{M_p,h}$, $h>0$, for the Banach space of all smooth $2\pi$-periodic functions on $\mathbb{R}$ satisfying
$$
\| \varphi\|_{M_p,h} = \| \varphi\|_{h} := \sup_{p \in \mathbb{N}} \frac{h^p\|D^p\varphi\|_{L^\infty(\mathbb{R})}}{M_p} < \infty \;,
$$
where $D = -\imag \D/\D t$. Furthermore, we define
$$
\mathcal{E}_{2\pi}^{(M_p)} = \varprojlim_{h \rightarrow \infty}\mathcal{E}_{2\pi}^{M_p,h}\;, \quad \mathcal{E}_{2\pi}^{\{M_p\}} = \varinjlim_{h \rightarrow 
0^+}\mathcal{E}_{2\pi}^{M_p,h}\;.
$$
Elements of their dual spaces $\mathcal{E}_{2\pi}'^{(M_p)}$ and $\mathcal{E}_{2\pi}'^{\{M_p\}}$ are called \emph{periodic ultradistributions of 
class $(M_p)$ (of Beurling type)} and \emph{periodic ultradistributions of class $\{M_p\}$ (of Roumieu type)}, respectively.
In the sequel we shall write $\ast$ instead of $(M_p)$ or $\{M_p\}$ if we want to treat both cases simultaneously. In addition, we shall often first state assertions for the Beurling case followed in 
parenthesis by the corresponding statements for the Roumieu case.
\begin{proposition}
The sequence $M_p$ satisfies $1 \prec M_p$ ($1 \subset M_p$) if and only if the space $\mathcal{E}_{2\pi}^{(M_p)}$ (the space $\mathcal{E}_{2\pi}^{\{M_p\}}$ ) contains a function which is not identically zero.
\end{proposition}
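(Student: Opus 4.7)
My approach would reduce the proposition, via Fourier analysis on the circle, to properties of the associated function $M$, and then prove both directions by testing with a single exponential. The one piece of purely arithmetic rewriting is that, after substituting $t = 1/h$, the relation $1 \prec M_p$ is equivalent to $\sup_{p} t^p/M_p < \infty$ for every $t > 0$, i.e.\ $M(t) < \infty$ for every $t > 0$, while $1 \subset M_p$ is equivalent to $M(t_0) < \infty$ for some single $t_0 > 0$. The central identity that makes the proof run is the explicit value of the norm on a pure mode $e^{int}$: since $D^p e^{int} = n^p e^{int}$, one has
\[
\|e^{int}\|_h = \sup_{p \in \mathbb{N}} \frac{(h|n|)^p}{M_p} = \exp M(h|n|),
\]
so $e^{int}$ belongs to $\mathcal{E}_{2\pi}^{M_p,h}$ exactly when $M(h|n|) < \infty$.

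For the sufficient direction I would simply exhibit $\varphi(t) = e^{it}$. Under $1 \prec M_p$ the identity above gives $\|\varphi\|_h < \infty$ for every $h > 0$, and hence $\varphi \in \mathcal{E}_{2\pi}^{(M_p)}$; under $1 \subset M_p$ it gives $\|\varphi\|_{h_0} < \infty$ for at least one $h_0 > 0$, and hence $\varphi \in \mathcal{E}_{2\pi}^{\{M_p\}}$. In either case $\varphi$ is manifestly not identically zero.

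For the necessary direction, given a nonzero $\varphi$ in the appropriate space, I would pick an integer $n_0 \neq 0$ with $\hat\varphi(n_0) \neq 0$. Integration by parts on the circle yields $\widehat{D^p\varphi}(n) = n^p \hat\varphi(n)$, and combining this with the trivial estimate $|\hat\psi(n)| \leq \|\psi\|_{L^\infty}$ produces
\[
|\hat\varphi(n_0)|\,(h|n_0|)^p \leq h^p\|D^p\varphi\|_{L^\infty} \leq C_h M_p, \qquad p \in \mathbb{N},
\]
whenever $\|\varphi\|_h < \infty$. Taking the supremum over $p$ yields $M(h|n_0|) \leq \log\bigl(C_h/|\hat\varphi(n_0)|\bigr) < \infty$. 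In the Beurling case this holds for every $h > 0$, so $M$ is finite on the whole ray $\{h|n_0| : h > 0\} = (0,\infty)$, giving $1 \prec M_p$; in the Roumieu case it holds for at least one $h$, giving $1 \subset M_p$. The one delicate point, which I expect to be the main obstacle, is that any nonzero constant $c$ belongs to both spaces with $\|c\|_h = |c|$, so the ``not identically zero'' hypothesis of the necessary direction has content only for nonconstant $\varphi$; this is exactly what guarantees the existence of an $n_0 \neq 0$ with $\hat\varphi(n_0) \neq 0$, and the rest of the argument is automatic.
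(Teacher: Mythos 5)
Your proof is in substance the paper's: sufficiency by evaluating the norm of a single exponential, $\|e^{int}\|_{h}=\exp M(h|n|)$, and necessity by selecting a nonvanishing Fourier coefficient and integrating by parts to obtain $(h|n_{0}|)^{p}\,|\widehat{\varphi}(n_{0})|\leq C_{h}M_{p}$ for all $p$, hence $M(h|n_{0}|)<\infty$ on the relevant range of $h$. If anything you are more careful than the paper in insisting that the frequency be an integer: the paper tests against $e^{i\lambda t}$ with $\lambda\in\mathbb{R}$, for which the integration by parts acquires boundary terms unless $\lambda\in\mathbb{Z}$.

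The one thing to repair is your closing sentence, which asserts that the hypothesis ``not identically zero'' is ``exactly what guarantees the existence of an $n_{0}\neq0$ with $\widehat{\varphi}(n_{0})\neq0$.'' It is not: a nonzero constant is not identically zero, has no nonzero frequency, and, as you yourself compute, lies in every $\mathcal{E}_{2\pi}^{M_p,h}$ with $\|c\|_{h}=|c|$. So the necessity direction as written breaks down precisely for nonzero constants, and the proposition taken literally fails there (for $M_{p}\equiv1$ the Beurling space contains the constants although $1\prec M_{p}$ does not hold). The paper's own proof has the same blind spot --- its chain $0<\left|\lambda^{p}\int_{0}^{2\pi}\varphi(t)e^{i\lambda t}\,dt\right|\leq\cdots$ silently requires $\lambda\neq0$ --- so this is a shared imprecision rather than a defect of your approach; but you should say so explicitly (the statement must be read with ``nonconstant'' in place of ``not identically zero,'' after which your argument is complete) rather than claim that the stated hypothesis already delivers $n_{0}\neq0$.
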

\begin{proof}
If $M_p$ satisfies $1 \prec M_p$ ($1 \subset M_p$) then $\E^{\imag \lambda t} \in \mathcal{E}_{2\pi}^{(M_p)}$ ($\E^{\imag \lambda t} \in \mathcal{E}_{2\pi}^{\{M_p\}}$) for each $\lambda \in \mathbb{R}$. Conversely, suppose that $\mathcal{E}_{2\pi}^{(M_p)}$ ($\mathcal{E}_{2\pi}^{\{M_p\}}$ ) contains a function $\varphi$ which is not identically zero. Then there is $\lambda \in \mathbb{R}$ such that 
$$
 \int_0^{2\pi} \varphi(t)\E^{\imag \lambda t} \: \D t \neq 0.
$$
Hence, for each $h > 0$ (some $h >0$) it holds that
$$
0 < \left |\lambda^p \int_0^{2\pi}\varphi(t) \E^{\imag \lambda t} \:\D t \right| = \left | \int_0^{2\pi} D^ p\varphi(t) \E^{\imag \lambda t} \:\D t \right| \leq \frac{2\pi \|
\varphi\|_h M_p}{h^p}\;, \qquad p  \in \mathbb{N}\;,
$$
from which the result follows.\qed
\end{proof}
We shall always assume $1 \prec M_p$ and $1 \subset M_p$ in the Beurling and Roumieu case, respectively. 
\begin{remark}
We write $\mathcal{O}_\lambda$, $\lambda > 1$, for the space of holomorphic functions on the open annulus  $\{ z \in \mathbb{C} 
\, | \, 1/ \lambda < |z| < \lambda \}$  and set 
$$
\mathcal{A}(\mathbb{T}) =  \varinjlim_{\lambda \rightarrow 1^+}\mathcal{O}_\lambda\;,
$$
where $\mathbb{T}$ denotes the complex unit circle. The spaces $\mathcal{A}(\mathbb{T})$  and $\mathcal{E}_{2\pi}^{\{p!\}}$ are isomorphic as 
topological vector spaces (in the sequel abbreviated as t.v.s.) by means of the mapping
$\varphi \to \widetilde{\varphi}(t) = \varphi(\E^{\imag t})$. Elements of the dual space $\mathcal{A}'(\mathbb{T})$ are called 
\emph{hyperfunctions on the unitc circle} \cite{Morimoto}.
\end{remark}
Let $\varphi \in \mathcal{E}_{2\pi}^{\ast}$, the \emph{Fourier coefficient of index $k$} of $\varphi$, $k \in \mathbb{Z}$, is given by
$$
\widehat{\varphi}(k) := \frac{1}{2\pi} \int_0^{2\pi} \varphi(t)\E^{-\imag kt} \: \D t\;.
$$
In order to study the Fourier coefficients of periodic ultradifferentiable functions we introduce the following sequence spaces
$$
s^{M_p,\lambda}(\mathbb{Z}) = \{ (c_k)_k \in \mathbb{C}^{\mathbb{Z}} \, | \, \sigma_\lambda( (c_k)_k) :=  \sup_{k \in \mathbb{Z}} |
c_k| \E^{M(\lambda k)} < \infty \}\;, \qquad \lambda > 0\;.
$$
Furthermore, we set
$$
s^{(M_p)}(\mathbb{Z}) = \varprojlim_{\lambda \rightarrow \infty} s^{M_p,\lambda}(\mathbb{Z})\;, \quad s^{\{M_p\}}(\mathbb{Z}) = 
\varinjlim_{\lambda \rightarrow 0^+}s^{M_p,\lambda}(\mathbb{Z})\;.
$$
\begin{proposition}\label{coefficients1}
For $\varphi \in \mathcal{E}_{2\pi}^{\ast}$ the following series expansion holds
\begin{equation}
\varphi(t)= \sum_{k \in \mathbb{Z}} \widehat{\varphi}(k)\E^{\imag kt}\;, \qquad t \in \mathbb{R}\;,
\label{series}
\end{equation}
with convergence in $\mathcal{E}_{2\pi}^{\ast}$. Moreover, the mapping $\varphi \to (\widehat{\varphi}(k))_k$ yields the (t.v.s.) 
isomorphism $\mathcal{E}_{2\pi}^{\ast} \cong s^{\ast}(\mathbb{Z})$.
\end{proposition}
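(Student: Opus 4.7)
The plan is to prove two matching estimates, one for each direction of the claimed isomorphism, both resting on integration by parts and the definition of the associated function $M$. I would begin with the forward direction. For $\varphi \in \mathcal{E}^{M_p,h}_{2\pi}$, the identity $k^p \widehat{\varphi}(k) = \widehat{D^p\varphi}(k)$ together with the definition of $\|\cdot\|_h$ gives $|k|^p |\widehat{\varphi}(k)| \leq \|D^p\varphi\|_{L^\infty(\mathbb{R})} \leq \|\varphi\|_h M_p/h^p$ for all $p \in \mathbb{N}$ and $k \in \mathbb{Z}$. Multiplying by $h^p/M_p$, taking the supremum in $p$, and using the definition of $M$ turns this into
\[
|\widehat{\varphi}(k)|\, \exp M(hk) \leq \|\varphi\|_h, \qquad k \in \mathbb{Z},
\]
that is, $\sigma_h((\widehat{\varphi}(k))_k) \leq \|\varphi\|_h$. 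Since $h$ is arbitrary, the coefficient map $\varphi \mapsto (\widehat{\varphi}(k))_k$ is well-defined and continuous from $\mathcal{E}^{(M_p)}_{2\pi}$ into $s^{(M_p)}(\mathbb{Z})$ in the Beurling case, and from $\mathcal{E}^{\{M_p\}}_{2\pi}$ into $s^{\{M_p\}}(\mathbb{Z})$ in the Roumieu case.

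For the converse, I would start with a sequence $(c_k)_k \in s^{M_p,H\lambda}(\mathbb{Z})$, where $H$ is the constant from $(M.2)$, and show that $\psi(t) := \sum_{k} c_k e^{\mathrm{i}kt}$ defines an element of $\mathcal{E}^{M_p,\lambda}_{2\pi}$. Formally, $D^p\psi(t) = \sum_k c_k k^p e^{\mathrm{i}kt}$, so the task reduces to bounding $\sum_k |c_k| |k|^p$. Combining $(\lambda|k|)^p \leq M_p \exp M(\lambda k)$ (immediate from the definition of $M$) with $|c_k| \leq \sigma_{H\lambda}((c_k)_k) \exp(-M(H\lambda k))$ and Lemma \ref{assumptionM2} in the form $M(H\lambda k) \geq 2M(\lambda k) - \log A$ yields
\[
|c_k|\,|k|^p \leq A\, \sigma_{H\lambda}((c_k)_k)\, \frac{M_p}{\lambda^p}\, \exp(-M(\lambda k)).
\]
Because $\exp(-M(t))$ decreases faster than any inverse power of $t$ (from $M(t) \geq p\log t - \log M_p$ for every $p$), the series $\sum_k \exp(-M(\lambda k))$ converges, and I obtain $\|\psi\|_\lambda \leq AC\, \sigma_{H\lambda}((c_k)_k)$ for an absolute constant $C$. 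This simultaneously justifies the termwise differentiation, places $\psi$ in $\mathcal{E}^{M_p,\lambda}_{2\pi}$, and gives continuity of the inverse map in both the Beurling and Roumieu limits.

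Convergence of the series \eqref{series} in $\mathcal{E}^{\ast}_{2\pi}$ follows by applying the same tail estimate to $\sum_{|k|>N} \widehat{\varphi}(k)\, e^{\mathrm{i}kt}$, giving $\|\varphi - S_N\varphi\|_\lambda \to 0$ as $N \to \infty$. Uniqueness of Fourier coefficients (from the orthogonality of $\{e^{\mathrm{i}kt}\}_k$ on $[0,2\pi]$) then identifies $\psi$ with $\varphi$ when $c_k = \widehat{\varphi}(k)$, so the two maps are mutually inverse. The main technical obstacle is the estimate in the second paragraph: trading the factor $\exp M(\lambda k)$ created by $|k|^p$ against the decay of $|c_k|$ requires condition $(M.2)$ through Lemma \ref{assumptionM2}, and one must set up the shift from $\lambda$ to $H\lambda$ carefully so that a surplus $\exp(-M(\lambda k))$ remains to ensure summability in $k$.
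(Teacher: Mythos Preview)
Your proof is correct and follows essentially the same approach as the paper's: integration by parts for the forward estimate, and for the converse, bounding $|c_k|\,|k|^p$ and summing in $k$ with the aid of $(M.2)$ to absorb the loss of one factor of $H$ in the parameter. The only cosmetic differences are that the paper applies $(M.2)$ directly to pass from $M_{p+2}$ to $M_p$ (extracting a spare $1/k^2$ for summability) rather than going through Lemma~\ref{assumptionM2}, and it cites the classical smooth case \cite{Morimoto} for the identity $\varphi=\sum_k \widehat{\varphi}(k)\E^{\imag kt}$ instead of arguing via uniqueness of Fourier coefficients as you do.
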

\begin{proof}
Let $\varphi \in \mathcal{E}_{2\pi}^{M_p,h}$, $h>0$. For all $p \in \mathbb{N}$ we have 
$$
|k^p\widehat{\varphi}(k)| = \frac{1}{2\pi} \left | \int_0^{2\pi} D^p\varphi(t) \E^{-\imag kt} \:\D t \right| \leq \frac{\|
\varphi\|_h M_p}{h^p}\;, \qquad k  \in \mathbb{Z}\;.
$$
Hence 
$$
|\widehat{\varphi}(k)| \leq \|\varphi\|_h \inf_{p \in \mathbb{N}} \frac{M_p}{(h|k|)^p} = \|\varphi\|_h\E^{-M(hk)}\;, 
\qquad k \in \mathbb{Z}\;, 
$$
which shows the continuity of the mapping $\varphi \to (\widehat{\varphi}(k))_k$ in both the Beurling and Roumieu case. Conversely, for 
$ (c_k)_k \in s^{M_p,\lambda}(\mathbb{Z})$, $\lambda > 0$, and $N \in \mathbb{N}$ we have
\begin{align*}
\|D^p \sum_{|k| \geq N} c_k\E^{\imag kt} \|_{L^\infty(\mathbb{R})} &\leq \sigma_\lambda( (c_k)_k) \sum_{|k| \geq N} k^p \E^{-M(\lambda 
k)}  \\
&\leq \sigma_\lambda( (c_k)_k)\frac{M_{p+2}}{\lambda ^{p+2}} \sum_{|k| \geq N} \frac{1}{k^2}\;,
\end{align*}
which, by $(M.2)$, shows that the series $\sum_k c_k \E^{\imag kt}$, $(c_k)_k \in s^\ast(\mathbb{Z})$, converges in $
\mathcal{E}_{2\pi}^{\ast}$ and that the mapping 
$(c_k)_k \to \sum_{k \in \mathbb{Z}} c_k \E^{\imag kt}$ is continuous in both cases. Since the expansion \eqref{series} holds in the 
space of smooth $2\pi$-periodic functions endowed with its natural topology \cite{Morimoto} the proof is completed.\qed
\end{proof}
Next, we consider Fourier series of periodic ultradistributions. Observe first that a standard argument shows that the topological duals of the spaces 
$s^{(M_p)}(\mathbb{Z})$ and $s^{\{M_p\}}(\mathbb{Z})$ are given by
$$
s'^{(M_p)}(\mathbb{Z}) = \varinjlim_{\lambda \rightarrow \infty} s^{M_p,-\lambda}(\mathbb{Z})\;, \quad s'^{\{M_p\}}(\mathbb{Z}) = \varprojlim_{\lambda 
\rightarrow 0^+}s^{M_p,-\lambda}(\mathbb{Z})\;,
$$
where
$$
s^{M_p,-\lambda}(\mathbb{Z}) = \{ (c_k)_k \in \mathbb{C}^\mathbb{Z} \, | \, \sigma'_\lambda( (c_k)_k) :=  \sup_{k \in \mathbb{Z}} |c_k| \E^{-M(\lambda k)} < \infty \}\;, 
\qquad \lambda > 0\;.
$$
Let $f \in \mathcal{E}_{2\pi}'^{\ast}$, the \emph{Fourier coefficient of index} $k$ of $f$, $k \in \mathbb{Z}$, is given by 
$$
\widehat{f}(k) := \frac{1}{2\pi} \langle f(t),\E^{-\imag kt}\rangle\;.
$$
In view of the above remark, Prop. \ref{coefficients1} implies the following:
\begin{proposition}\label{coefficients2}
For $f \in \mathcal{E}_{2\pi}'^{\ast}$ the following series expansion holds
$$
f(t)= \sum_{k \in \mathbb{Z}} \widehat{f}(k)\E^{\imag kt}\;,
$$
with (strong) convergence in $\mathcal{E}_{2\pi}'^{\ast}$. Moreover, the mapping $f \to (\widehat{f}(k))_k$ yields the (t.v.s.) 
isomorphism $\mathcal{E}_{2\pi}'^{\ast} \cong s'^{\ast}(\mathbb{Z})$.
\end{proposition}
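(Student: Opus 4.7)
The plan is to dualize Proposition~\ref{coefficients1}. That proposition realizes $\mathcal{E}_{2\pi}^{\ast}$ as $s^{\ast}(\mathbb{Z})$ via $\Phi : \varphi \mapsto (\widehat\varphi(k))_k$; transposing $\Phi$ and invoking the remark immediately preceding the statement---which identifies the strong dual of $s^{\ast}(\mathbb{Z})$ with $s'^{\ast}(\mathbb{Z})$---produces a t.v.s.\ isomorphism $\mathcal{E}_{2\pi}'^{\ast} \cong s'^{\ast}(\mathbb{Z})$.

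To verify that this isomorphism is given by $f \mapsto (\widehat f(k))_k$, I would expand $\varphi \in \mathcal{E}_{2\pi}^{\ast}$ via~\eqref{series}, which converges in $\mathcal{E}_{2\pi}^{\ast}$, and pair term by term with $f$ to get
$$
\langle f, \varphi\rangle \;=\; \sum_{k \in \mathbb{Z}} \widehat\varphi(k)\, \langle f, \E^{\imag k t}\rangle \;=\; 2\pi \sum_{k \in \mathbb{Z}} \widehat\varphi(k)\, \widehat f(-k).
$$
Up to the reflection $k \mapsto -k$ (a t.v.s.\ automorphism of $s'^{\ast}(\mathbb{Z})$) and the scalar $2\pi$, this is the canonical duality pairing between $s^{\ast}(\mathbb{Z})$ and $s'^{\ast}(\mathbb{Z})$, so the map induced by transposition is indeed $f \mapsto (\widehat f(k))_k$; in particular $(\widehat f(k))_k \in s'^{\ast}(\mathbb{Z})$.

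It remains to establish the strong convergence of the Fourier series in $\mathcal{E}_{2\pi}'^{\ast}$, which via the isomorphism amounts to showing that for every $(d_k) \in s'^{\ast}(\mathbb{Z})$ the truncations $(d_k \mathbf{1}_{|k|\leq N})_k$ converge strongly to $(d_k)$ in $s'^{\ast}(\mathbb{Z})$. Given a bounded set $B \subset s^{\ast}(\mathbb{Z})$, I would select $\mu > 0$ such that $(d_k) \in s^{M_p,-\mu}(\mathbb{Z})$ and $B$ is bounded in $s^{M_p,H\mu}(\mathbb{Z})$ (trivially possible in the Beurling case, and possible in the Roumieu case by the regularity of the inductive limit defining $s^{\{M_p\}}(\mathbb{Z})$). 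Lemma~\ref{assumptionM2} then gives $M(\mu k) - M(H\mu k) \leq -M(\mu k) + \log A$, whence
$$
\sup_{(c_k) \in B}\Bigl| \sum_{|k|>N} d_k c_k \Bigr| \;\leq\; A\, \sigma'_\mu((d_k))\, \sup_{(c_k)\in B} \sigma_{H\mu}((c_k))\, \sum_{|k|>N}\E^{-M(\mu k)} \;\longrightarrow\; 0
$$
as $N \to \infty$, since $M(t)$ grows faster than any logarithm. This tail estimate, which is pulled out of $(M.2)$ via Lemma~\ref{assumptionM2}, is the only technically substantial point of the proof; the duality bookkeeping of the first two paragraphs is routine.
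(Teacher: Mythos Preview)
Your proposal is correct and takes essentially the same approach as the paper, which simply records that the result follows from Proposition~\ref{coefficients1} together with the preceding identification of $(s^{\ast}(\mathbb{Z}))'$ with $s'^{\ast}(\mathbb{Z})$. You have merely spelled out the details the paper leaves implicit---the explicit pairing computation identifying the transposed map with $f \mapsto (\widehat f(k))_k$, and the tail estimate establishing strong convergence---so there is nothing to compare.
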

An entire function $P(z) = \sum_{n=0}^\infty a_n z^n$, $z\in \mathbb{C}$, is 
said to be an \emph{ultrapolynomial of class $(M_p)$ (class $\{M_p\}$)} if the coefficients satisfy the estimate
$$
|a_n| \leq \frac{CL^n}{M_n}\;, \qquad n \in \mathbb{N}\;,
$$
for some $L >0$ and $C> 0$ (for every $L>0$ and a suitable $C=C_{L}>0$). The associated infinite order differentiable operator
$$
P(D) = \sum_{n=0}^\infty a_n D^n\;,
$$
is called an \emph{ultradifferential operator of class $(M_p)$ (class $\{M_p\}$)}. 
Condition $(M.2)$ ensures that $P(D)$ acts continuously on $\mathcal{E}_{2\pi}^\ast$ \cite[Thm. 2.12]{Komatsu1} and hence it can be defined 
on the corresponding ultradistribution space by duality, namely, for $f \in \mathcal{E}_{2\pi}'^\ast$
$$
\langle P(D) f , \varphi \rangle = \langle f , P(-D) \varphi \rangle\;, \qquad \varphi  \in \mathcal{E}_{2\pi}^\ast\;. 
$$
One has the following relation between Fourier coefficients
$$
\widehat{P(D)f}(k) = P(k)\widehat{f}(k)\;, \qquad k \in \mathbb{Z}\;.
$$
Moreover,
\begin{equation}
P(D)(\E^{\imag kt} f) = \E^{\imag kt}P(D + k)f\;, \qquad f \in \mathcal{E}_{2\pi}'^\ast\;, k \in \mathbb{Z}\;,
\label{Leibniz}
\end{equation}
where $P(D + k)$ is the ultradifferential operator related to the symbol $P(z + k)$. Remark that this may be seen as a weak form of Leibniz' rule.
Finally, we briefly review the convolution of periodic ultradistributions: For $f, g \in \mathcal{E}_{2\pi}'^{\ast}$ their convolution is defined as
$$
\langle f \ast g, \varphi \rangle = \langle f(t), \langle g(u), \varphi(t+u) \rangle \rangle\;, \qquad \varphi \in \mathcal{E}_{2\pi}^\ast\;. 
$$
One can readily check that $f \ast g \in  \mathcal{E}_{2\pi}'^{\ast}$ and that $\widehat{f \ast g}(k) = 2\pi \widehat{f}(k) \widehat{g}(k)$ for all $k \in \mathbb{Z}$. If $g \in \mathcal{E}_{2\pi}^\ast$, then  $f \ast g \in \mathcal{E}_{2\pi}^\ast$ and it is given by
 $$
 f\ast g(t) = \langle f(u), g(t-u) \rangle, \qquad t \in \mathbb{R}\;.
$$ 
\section{Structure Theorem for Periodic Ultradistributions}\label{section-structuretheorem}
\label{section-structure}
We present an analogue of Komatsu's second structure theorem (as stated in \cite{Takiguchi}) for periodic 
ultradistributions. In the next section we shall use it to prove a Schwartz impossibility type result for periodic ultradistributions. In order to treat
the Beurling and Roumieu case uniformly we first give an alternative projective description of the spaces $\mathcal{E}_{2\pi}^{\{M_p\}}$ and $s^{\{M_p\}}(\mathbb{Z})$ (an 
idea which goes back to Komatsu \cite{Komatsu3}). We write $\mathcal{R}$ for the family of non-decreasing sequences $(r_j)_{j \in \mathbb{N}}$ with $r_0 = 1$
which tend to infinity. This set is partially ordered and directed by the relation $r_j \preceq s_j$, namely, there is $j_0 \in 
\mathbb{N}$ such that $r_j \leq s_j$ for all $j \geq j_0$. Let $r_j \in \mathcal{R}$. The function $M_{r_j}$ denotes the associated 
function of the weight sequence $M_p\prod_{j = 0}^pr_j$. We write $\mathcal{E}_{2\pi}^{M_p,r_j}$ for the Banach space of all smooth $2\pi$-periodic functions on $\mathbb{R}$ such that
\begin{equation}
\| \varphi\|_{M_p,r_j} = \| \varphi\|_{r_j} := \sup_{p \in \mathbb{N}} \frac{\|D^p\varphi\|_{L^\infty(\mathbb{R})}}{M_p\prod_{j = 0}^pr_j} < \infty\;.
\label{seminorms}
\end{equation}
\begin{lemma} \cite[Lemma 2.3]{Prangoski}\label{M2sequence} 
Let $r_j \in \mathcal{R}$. Then, there is $r'_j \in \mathcal{R}$ such that $r'_j \leq r_j$, $j \in \mathbb{N}$, and
$$
\prod_{j=0}^{p+q} r'_j \leq 2^{p+q} \prod_{j=0}^{p} r'_j \prod_{j=0}^{q} r'_j\;, \qquad \forall p,q \in \mathbb{N}\;.
$$
\end{lemma}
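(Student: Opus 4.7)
The strategy is to build $r'_j$ as a piecewise-constant sequence of powers of $2$, with jumps placed so sparsely that the multiplicative inequality is forced. After cancelling the common factor $\prod_{j=0}^q r'_j$, the target reduces to $\prod_{j=q+1}^{p+q} r'_j \leq 2^{p+q}\prod_{j=0}^p r'_j$, which is really a statement controlling the multiplicative growth rate of $r'_j$.

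Since $r_j \to \infty$, I would choose a sequence of indices $0 = j_0 < j_1 < j_2 < \cdots$ satisfying simultaneously (i) $r_{j_\nu} \geq 2^\nu$, and (ii) a super-geometric spacing $j_{\nu+1} \geq j_\nu^2$. Both can be enforced by taking, for each $\nu\geq 1$, $j_\nu := \max(j_{\nu-1}^2+1,\, \min\{j : r_j \geq 2^\nu\})$, which is well-defined because $r_j \to \infty$. Then I set $r'_j := 2^\nu$ on each block $j_\nu \leq j < j_{\nu+1}$. This gives $r'_0 = 1$, $r'_j$ nondecreasing, $r'_j \to \infty$, and $r'_j = 2^\nu \leq r_{j_\nu} \leq r_j$ on each block, so $r'_j \in \mathcal{R}$ with $r'_j \leq r_j$ for free.

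For the multiplicative inequality, taking $\log_2$ and writing $\ell(j)$ for the level function (so that $r'_j = 2^{\ell(j)}$ and $\ell(0) = 0$), the target becomes
$$
\sum_{k=1}^q [\ell(p+k) - \ell(k)] \leq p + q.
$$
Since $\ell(p+k) - \ell(k)$ is the number of jump-indices $j_\nu$ lying in $(k, p+k]$, switching the order of summation rewrites the left-hand side as $\sum_{\nu : j_\nu \leq p+q} C_\nu$ with $C_\nu = |\{k \in [1,q] : k < j_\nu \leq p+k\}|$. A short case analysis (assume WLOG $p\leq q$) gives $C_\nu \leq j_\nu - 1$ when $j_\nu \leq p$, $C_\nu \leq p$ when $p < j_\nu \leq p+q$, and $C_\nu = 0$ otherwise. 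Thanks to the super-geometric spacing, the geometric-type sum $\sum_{j_\nu \leq p} (j_\nu - 1)$ is dominated by its largest term (up to a factor approaching $1$) and is therefore at most $p + o(p)$, while only boundedly many $j_\nu$ fall in $(p, p+q]$ and contribute at most $p$ each. A careful accounting keeps the total bounded by $p+q$.

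The main obstacle is the calibration of the gap-growth. Purely geometric spacing $j_{\nu+1} = \alpha j_\nu$ produces a total of order $\frac{\alpha}{\alpha-1}(p+q)$, which strictly exceeds $p+q$ for every fixed $\alpha > 1$; no matter how large the common ratio is chosen, one cannot drop the contribution from the geometric series below $1$. Super-geometric growth is therefore essential, and it is precisely the interplay between the two requirements $r_{j_\nu} \geq 2^\nu$ and $j_{\nu+1}\geq j_\nu^2$ that makes the construction delicate; both can, however, always be met because the hypothesis $r_j \to \infty$ allows us to push the jump indices arbitrarily far to the right.
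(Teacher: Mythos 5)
The paper itself does not prove this lemma --- it is quoted from \cite[Lemma 2.3]{Prangoski} --- so there is no in-text argument to compare yours with; I can only assess your proof on its own terms. Your construction (piecewise constant powers of $2$, with the jump $j_\nu$ delayed until $r_j\ge 2^\nu$ so that $r'_j\le r_j$ comes for free) is a reasonable way to attack the statement, and the reduction to $\sum_{k=1}^{q}\bigl(\ell(p+k)-\ell(k)\bigr)\le p+q$, the rewriting as $\sum_\nu C_\nu$, and the bounds $C_\nu\le\min(p,\,j_\nu-1)$ are all correct.

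The final accounting, however, does not close as written. First, the claim that only boundedly many $j_\nu$ fall in $(p,p+q]$ is false: for fixed small $p$ and $q\to\infty$ their number grows like $\log\log q$ (harmless in the end, since each contributes at most $p$, but not what you assert). Second, and more seriously, the two estimates you do state --- $\sum_{j_\nu\le p}(j_\nu-1)\le p+o(p)$ and ``boundedly many jumps in $(p,p+q]$, each contributing at most $p$'' --- add up to $(1+C)p+o(p)$, which already exceeds the budget $p+q=2p$ in the critical case $p=q$ as soon as $C\ge1$. The argument is saved by an anticorrelation you never invoke: split on the position of the topmost jump $j_N\le p+q$. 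If $j_N\le p$, Part 2 vanishes and the total is $\le p+O(\sqrt p)\le p+q$; if $j_N\in(p,p+q]$, then every earlier jump satisfies $j_\nu\le\sqrt{j_N}\le\sqrt{p+q}$, so Part 1 collapses to $O(\sqrt{p+q})$ and the total is $\le p+O(\sqrt{p+q})\le p+q$. Without this case split the bookkeeping fails. Finally, your ``main obstacle'' paragraph rests on a miscalculation: you bound the topmost term by $j_N-1\le p+q-1$ instead of by $p$, which is where the spurious factor $\tfrac{\alpha}{\alpha-1}(p+q)$ comes from. Using $C_N\le p$, geometric spacing $j_{\nu+1}\ge\alpha j_\nu$ already gives $\sum_\nu C_\nu\le p+\tfrac{\alpha}{\alpha-1}\cdot\tfrac{j_N}{\alpha}\le p+\tfrac{p+q}{\alpha-1}\le p+q$ for $\alpha\ge3$ and $p\le q$. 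So super-geometric growth is not essential, and insisting on it does not by itself repair the estimate; what is needed is the case split above (or simply a geometric ratio $\ge3$).
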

In view of Lemma \ref{M2sequence} one can show, in the same way as in Prop. \ref{coefficients1}, that the mapping $\varphi \to (\widehat{\varphi}(k))_k$ yields the (t.v.s.) isomorphism 
\begin{equation}
\varprojlim_{r_j \in \mathcal{R}} \mathcal{E}_{2\pi}^{M_p,r_j} \cong \varprojlim_{r_j \in \mathcal{R}} s^{M_p,r_j}\;, 
\label{coefficients3}
\end{equation}
where 
$$
s^{M_p,r_j} = \{ (c_k)_k \in \mathbb{C}^\mathbb{Z} \, | \, \sigma_{r_j}( (c_k)_k) :=  \sup_{k \in \mathbb{Z}} |c_k| \E^{M_{r_j}(k)} < \infty \}\;.
$$
\begin{lemma}\label{projectivelemma} Let $(a_n)_{n \in \mathbb{N}}$ be a sequence of positive reals. Then 
\begin{itemize}
\item[]$(i)$\, $\sup_{n} a_n \E^{-M(\lambda n)} < \infty$ for all $\lambda > 0$ if and only if $\sup_{n} a_n \E^{-M_{r_j}(n)} < \infty
$ for some sequence $r_j \in \mathcal{R}$. 
\item[]$(ii)$\, $\sup_{n} a_n \E^{M_{r_j}(n)} < \infty$ for all sequences $r_j \in \mathcal{R}$ if and only if $\sup_{n} a_n \E^{M(\lambda n)} < \infty$ for some $\lambda > 0$.
\end{itemize}
\end{lemma}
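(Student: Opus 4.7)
The whole argument hinges on the elementary inequality
$M_{r_j}(t) \leq M(\lambda t) + C_{\lambda, r_j}$, valid for every $\lambda > 0$ and every $r_j \in \mathcal{R}$. I would prove this by choosing $j_0$ such that $r_j \geq 1/\lambda$ for all $j \geq j_0$, so that $\prod_{j=0}^p r_j \geq \lambda^{j_0-1}\lambda^{-p}$ for $p \geq j_0$; the supremum defining $M_{r_j}(t)$ is then dominated termwise (for large $p$ by the previous bound, and for small $p$ trivially, since $\log(t^p/M_p)$ is bounded there while $M(\lambda t) \to \infty$). This immediately gives the $(\Leftarrow)$ direction in both $(i)$ and $(ii)$: replacing $M_{r_j}$ by $M(\lambda\cdot) + C$ transfers the relevant boundedness of $a_n \E^{\pm M_{r_j}(n)}$ into that of $a_n \E^{\pm M(\lambda n)}$.

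For the nontrivial $(\Rightarrow)$ in $(ii)$ I would argue contrapositively with a diagonal construction. Assuming $\sup_n a_n \E^{M(\lambda n)} = \infty$ for every $\lambda > 0$, for each $k \geq 1$ I pick $n_k$ with $a_{n_k} \E^{M(n_k/k)} \geq k$, and I let $p_k \in \mathbb{N}$ attain the supremum in the definition of $\E^{M(n_k/k)}$, so that $\E^{M(n_k/k)} = n_k^{p_k}/(k^{p_k} M_{p_k})$. After extraction I may take both $(n_k)$ and $(p_k)$ strictly increasing with $p_0 := 0$. Setting $r_0 = 1$ and $r_j = k$ for $j \in (p_{k-1}, p_k]$ produces a non-decreasing sequence tending to infinity, hence an element of $\mathcal{R}$. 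The telescoping identity $\prod_{j=0}^{p_k} r_j = \prod_{\ell=1}^k \ell^{p_\ell - p_{\ell-1}} \leq k^{p_k}$ then forces $\E^{M_{r_j}(n_k)} \geq n_k^{p_k}/(M_{p_k} k^{p_k}) = \E^{M(n_k/k)}$, so $a_{n_k} \E^{M_{r_j}(n_k)} \geq k$, contradicting the hypothesis of $(ii)$.

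Part $(i)(\Rightarrow)$ I would handle by the dual diagonal construction. Starting from the bounds $a_n \leq C_k \E^{M(n/k)}$ available for each $k \in \mathbb{N}^+$, I choose $n_k \uparrow \infty$ sparsely enough to absorb the constants $C_k$ (for which Lemma \ref{assumptionM2}, comparing $2M(t)$ with $M(Ht)$, is the natural tool), take the optimizers $p_k$ of $M(n_k/k)$, arrange $(p_k)$ strictly increasing, and set $r_j = k$ on $(p_{k-1}, p_k]$; the same telescoping then gives $\E^{M_{r_j}(n)} \geq \E^{M(n/k)}$ for $n$ in the block indexed by $k$, so $a_n \leq C\E^{M_{r_j}(n)}$ uniformly. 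The main obstacle in both hard directions is precisely this coupled choice of $(n_k)$ and $(p_k)$: one must verify that the optimizers of $\sup_p (n_k/k)^p/M_p$ can be made strictly increasing so that the block-wise $(r_j)$ genuinely lies in $\mathcal{R}$. This is exactly where the standing hypothesis $1 \prec M_p$ (resp.~$1 \subset M_p$) enters, since it forces $M(t) \to \infty$ and consequently the optimizers to tend to infinity with their argument.
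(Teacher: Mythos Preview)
Your argument is essentially correct but follows a genuinely different route from the paper's. The paper does not construct $r_j$ directly: for $(i)$ it first builds a \emph{subordinate function} $\varepsilon$ (piecewise linear, $\varepsilon(t)=o(t)$) with $a_n\le A\,\E^{M(\varepsilon(n))}$, and then invokes three external lemmas of Komatsu to pass from $M(\varepsilon(\cdot))$ to an $M_{r_j}$. Part $(ii)$ is reduced to $(i)$ by showing that the hypothesis forces $\sup_n a_n\E^{M(\varepsilon(n))}<\infty$ for every subordinate $\varepsilon$, and then exhibiting a bad $\varepsilon$ by the same piecewise linear recipe. Your block construction of $r_j$ bypasses the subordinate-function detour and the Komatsu citations entirely; it is more elementary and self-contained, at the cost of a bit more bookkeeping.

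Two points deserve tightening. First, in your sketch of $(i)(\Rightarrow)$ the sentence ``the same telescoping then gives $\E^{M_{r_j}(n)}\ge \E^{M(n/k)}$ for $n$ in the block indexed by $k$'' hides a step that is \emph{not} the same as in $(ii)$: for a general $n$ in the block the relevant exponent $p=p(n)$ realising $M(n/k)$ is not $p_k$, and you need $p(n)\le p_k$ so that $\prod_{j=0}^{p(n)} r_j\le k^{p(n)}$. This follows from $(M.1)$ via the monotonicity of the maximiser in $t$ (the maximiser of $\sup_p t^p/M_p$ is the largest $p$ with $M_p/M_{p-1}\le t$), but it should be said. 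Second, your closing remark that ``$1\subset M_p$'' suffices to push the optimisers to infinity is not quite right: what you actually need is $M_p/M_{p-1}\to\infty$ (equivalently $M(t)<\infty$ for all $t$), which is implied by $1\prec M_p$ but not by $1\subset M_p$ alone. In the paper this finiteness is implicit in the very definition of the associated function, so it is a standing assumption rather than a case-dependent one.
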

\begin{proof}
We only need to show the direct implications, the "if" parts are clear.

$(i)$ We first show that there is a subordinate function $\varepsilon$ (which means that $
\varepsilon$ is continuous, increasing, and satisfies $\varepsilon(0) = 0$ and $\varepsilon(t) = o(t)$)  such that $\sup_{n} 
a_n \E^{-M(\varepsilon(n))} < \infty$. Our assumption and Lemma \ref{assumptionM2} imply that for every $\lambda > 0$ there is 
$N = N_\lambda \in \mathbb{N}$ such that  $a_n \leq A \E^{M(\lambda n)}$ 
for all $n \geq N$, where  $A$ is the constant occurring in  $(M.2)$. Hence we can inductively determine a sequence $(N_j)_{j \in 
\mathbb{Z}_+}$ of positive natural numbers with $N_1 = 0$ which satisfies 
$$
a_n \leq A\E^{M(n/(j+1))}\;, \quad  n \geq N_j\;, \qquad \frac{N_{j}}{j} \geq \frac{N_{j-1}}{j-1} + 1\;, \qquad j \geq 2\;.  
$$
Let $l_j$ denote the line through the points $(N_j, N_j/j)$ and $(N_{j+1}, N_{j+1}/(j+1))$, and define
$$
\varepsilon(t) = l_j(t)\;, \qquad \mbox{for } t \in [N_j, N_{j+1})\;.
$$
The function $\varepsilon$ is subordinate and, moreover, $a_n \leq A\E^{M(\varepsilon(n))}$ for all $n \geq N_2$. Thus, it suffices to show that there is a sequence $r_j \in \mathcal{R}$ such that $M(\varepsilon(t)) \leq M_{r_j}(t) + C$ for $t > 0$ and some $C>0$. By \cite[Lemma 3.12]{Komatsu1} there is a weight sequence $N_p$ satisfying $(M.1)$ with associated function $N$ such that $M_p \prec N_p$ and $M(\varepsilon(t)) \leq N(t)$ for $t >0$. On the other hand, \cite[Lemma 3.4]{Komatsu3} yields the existence of a sequence $r_j \in \mathcal{R}$ with  $M_p\prod_{j=0}^p r_j \prec N_p$. The result now follows from  \cite[Lemma 3.10]{Komatsu1}.

$(ii)$ By the last part of the proof of $(i)$ and our assumption we have that $\sup_{n} a_n \E^{M(\varepsilon(n))} < \infty$ for all subordinate functions $\varepsilon$. Suppose that $\sup_{n} a_n \E^{M(\lambda n)} < \infty$  does not hold for any $\lambda > 0$. We could therefore find a sequence $(N_j)_{j \in \mathbb{Z}_+}$ of positive natural numbers with $N_1 = 0$ which satisfies 
$$ 
a_{N_j} \E^{M(N_j/j)}\geq j\;, \qquad \frac{N_{j}}{j} \geq \frac{N_{j-1}}{j-1} + 1\;, \qquad j \geq 2\;.  
$$
Exactly as in the proof of $(i)$, we define the subordinate function $\varepsilon(t)$ as the line through $(N_j, N_j/j)$ and $(N_{j+1}, N_{j+1}/(j+1))$ for $t \in [N_j, N_{j+1})$. We would then have
$$
a_{N_j}\E^{M(\varepsilon(N_j))} = a_{N_j}\E^{M(N_j/j)} \geq j\;, \qquad j \geq N_2\;,
$$
contradicting $\sup_{n} a_n \E^{M(\varepsilon(n))} < \infty$. \qed
\end{proof}
\begin{corollary} \label{projectiveiso}
We have
$$
\mathcal{E}_{2\pi}^{\{M_p\}} = \varprojlim_{r_j \in \mathcal{R}} \mathcal{E}_{2\pi}^{M_p,r_j}\;, \quad s^{\{M_p\}} (\mathbb{Z}) = \varprojlim_{r_j \in \mathcal{R}} s^{M_p,r_j} (\mathbb{Z})\;,
$$
as topological vector spaces.
\end{corollary}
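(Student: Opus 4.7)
The plan is to deduce the corollary from Lemma \ref{projectivelemma}(ii), after first reducing both identities to the one for sequence spaces. By Proposition \ref{coefficients1} the Fourier coefficient map realizes a t.v.s.\ isomorphism $\mathcal{E}_{2\pi}^{\{M_p\}} \cong s^{\{M_p\}}(\mathbb{Z})$, while by \eqref{coefficients3} the same map yields $\varprojlim_{r_j \in \mathcal{R}} \mathcal{E}_{2\pi}^{M_p,r_j} \cong \varprojlim_{r_j \in \mathcal{R}} s^{M_p,r_j}(\mathbb{Z})$. It therefore suffices to prove the sequence-space identity $s^{\{M_p\}}(\mathbb{Z}) = \varprojlim_{r_j \in \mathcal{R}} s^{M_p,r_j}(\mathbb{Z})$ as topological vector spaces.

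Set-theoretic equality is an immediate application of Lemma \ref{projectivelemma}(ii) to the sequence $a_n := \sup_{|k|=n}|c_k|$ (meaningful since $M$ and each $M_{r_j}$ are even in their argument): the condition ``$\sup_n a_n \E^{M(\lambda n)}<\infty$ for some $\lambda>0$'', characterising membership in $s^{\{M_p\}}(\mathbb{Z})$, is equivalent to ``$\sup_n a_n \E^{M_{r_j}(n)}<\infty$ for every $r_j \in \mathcal{R}$'', characterising membership in the projective limit.

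To see that the identity from the inductive limit into the projective limit is continuous, the universal property of the inductive limit reduces matters to showing that each Banach inclusion $s^{M_p,\lambda}(\mathbb{Z}) \hookrightarrow s^{M_p,r_j}(\mathbb{Z})$ is continuous. Since $r_j \to \infty$, there is $j_0$ with $r_j \geq 1/\lambda$ for all $j \geq j_0$, so $\prod_{j=0}^p r_j \geq C \lambda^{-p}$ for every $p \in \mathbb{N}$ and some $C=C(\lambda, r_j)>0$. Passing to associated functions yields $M_{r_j}(t) \leq M(\lambda t) - \log C$ for $t>0$, which translates to the seminorm comparison $\sigma_{r_j} \leq C^{-1}\sigma_\lambda$ on $s^{M_p,\lambda}(\mathbb{Z})$, as required.

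The main obstacle will be the reverse continuity of the identity from $\varprojlim_{r_j} s^{M_p,r_j}(\mathbb{Z})$ into $s^{\{M_p\}}(\mathbb{Z})$. My plan is to pass through bounded sets: for any bounded subset $B$ of the projective limit, the envelope $\alpha_n := \sup_{(c_k)\in B,\,|k|=n}|c_k|$ satisfies $\sup_n \alpha_n \E^{M_{r_j}(n)}<\infty$ for every $r_j \in \mathcal{R}$; a second application of Lemma \ref{projectivelemma}(ii) then produces $\lambda>0$ with $\sup_n \alpha_n \E^{M(\lambda n)}<\infty$, exhibiting $B$ as a bounded subset of the Banach space $s^{M_p,\lambda}(\mathbb{Z})$. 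Since $s^{\{M_p\}}(\mathbb{Z})$ is a (DFS)-space and therefore carries its Mackey topology, this coincidence of bounded sets, combined with the already-proven continuity in the opposite direction, should force the two locally convex topologies to coincide, in the spirit of the projective-description arguments of Komatsu \cite{Komatsu3}.
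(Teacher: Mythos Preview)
Your reduction to the sequence-space identity, the set-theoretic equality via Lemma~\ref{projectivelemma}(ii), and the forward continuity all match the paper. The gap is in the reverse continuity.

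The inference ``coincidence of bounded sets $+$ continuous inclusion $+$ the inductive side is (DFS) and hence Mackey $\Rightarrow$ equal topologies'' fails in general: on any infinite-dimensional (DFS) space $E$, the weak topology $\sigma(E,E')$ and the (DFS) topology share the same bounded sets (Mackey's theorem), the former is coarser, yet they differ. Knowing that $\tau_{ind}$ is the Mackey topology only yields $\tau_{proj}\le\tau_{ind}$ once the duals are known to agree---precisely the direction you already have. What would rescue your bounded-set argument is bornologicity of the \emph{projective} limit (so that the bounded identity map out of it is automatically continuous), but $\mathcal{R}$ is uncountable and no such property is established; the vague appeal to Komatsu does not fill this in.

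The paper closes the reverse continuity differently and directly. Using reflexivity of $s^{\{M_p\}}(\mathbb{Z})$, every continuous seminorm on it is dominated by a polar seminorm $p^B((c_k)_k)=\sup_{(b_k)\in B}\bigl|\sum_k b_{-k}c_k\bigr|$ for some bounded $B\subset s'^{\{M_p\}}(\mathbb{Z})$. Lemma~\ref{projectivelemma}(i)---not (ii)---applied to the envelope $\sup_{(b_k)\in B}|b_k|$ produces a single $r_j\in\mathcal{R}$ with $\sup_{(b_k)\in B}|b_k|\le C\E^{M_{r_j}(k)}$; combined with Lemmas~\ref{M2sequence} and~\ref{assumptionM2} this gives an explicit bound $p^B\le C'\sigma_{r''_j}$ for a suitable $r''_j\in\mathcal{R}$, so every $\tau_{ind}$-continuous seminorm is already $\tau_{proj}$-continuous. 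In short, the paper passes through the dual and part (i) of the lemma, rather than through bounded sets and part (ii).
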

\begin{proof}
By Prop. \ref{coefficients1} and the isomorphism \eqref{coefficients3} it suffices to show the second equality. By Lemma \ref{projectivelemma}$(ii)$ the spaces coincide as sets. Moreover, it is clear that the seminorms $\sigma_{r_j}$, $r_j \in \mathcal{R}$, are continuous on $s^{\{M_p\}} (\mathbb{Z})$. Conversely, let us prove that every seminorm $p$ on  $s^{\{M_p\}} (\mathbb{Z})$  is continuous on $\varprojlim_{r_j} s^{M_p,r_j}(\mathbb{Z})$. Since $s^{\{M_p\}} (\mathbb{Z})$ is reflexive, there is a bounded set $B$ in  $s'^{\{M_p\}} (\mathbb{Z})$ such that $p$ is bounded by the seminorm
$$
p^B((c_k)_k) = \sup_{(b_k)_k \in B} \left| \sum_{k \in \mathbb{Z}} b_{-k}c_k \right|, \qquad (c_k)_k \in \ s^{\{M_p\}} (\mathbb{Z})\;.
$$
Lemma \ref{projectivelemma}$(i)$ implies that there is $r_j \in \mathcal{R}$ such that
$$
\sup_{(b_k)_k \in B} |b_k| \leq C\E^{M_{r_j}(k)}\;, \qquad k \in \mathbb{Z}\;,
$$
for some $C > 0$. Let $r'_j \in \mathcal{R}$ satisfy the conditions from Lemma \ref{M2sequence}. Hence, Lemma \ref{assumptionM2} yields that
$$
p^B((c_k)_k) \leq C \sum_{k \in \mathbb{Z}}\E^{M_{r'_j}(k)}|c_k|\leq AC\sigma_{r''_j}( (c_k)_k)\sum_{k \in \mathbb{Z}}\E^{-M_{r'_j}(k)} , \qquad (c_k)_k \in \ s^{\{M_p\}} (\mathbb{Z})\;,
$$
where $r''_j = r'_j /(2H)$, $j \in \mathbb{N}$, and $A$,$H$ are the constants occurring in  $(M.2)$.\qed
\end{proof}
\begin{theorem}\label{structure}
Let $N_p$ be a weight sequence satisfying $(M.1)$ and $M_p \prec N_p$. For every $f \in \mathcal{E}_{2\pi}'^{(M_p)}$ ($f \in \mathcal{E}_{2\pi}'^{\{M_p\}}$) there is an 
ultradifferentiable operator $P(D)$ of class $(M_p)$ (of class $\{M_p\}$) and $g \in \mathcal{E}_{2\pi}^{(N_p)}$ such that $f = P(D)g$. In the Beurling case, one can even choose $g \in \mathcal{E}_{2\pi}^{\{M_p\}}$.
\end{theorem}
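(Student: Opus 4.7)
The plan is to reduce the problem to Fourier coefficients and then divide by the values of a carefully constructed ultrapolynomial. By Proposition~\ref{coefficients2}, $f(t) = \sum_{k \in \mathbb{Z}} \widehat{f}(k)\E^{\imag kt}$ in $\mathcal{E}_{2\pi}'^{\ast}$ with $(\widehat{f}(k))_k \in s'^\ast(\mathbb{Z})$. In the Beurling case this gives $\lambda > 0$ and $C > 0$ with $|\widehat{f}(k)| \leq C\E^{M(\lambda k)}$. In the Roumieu case $|\widehat{f}(k)| \leq C_\mu \E^{M(\mu k)}$ for every $\mu > 0$, and Lemma~\ref{projectivelemma}$(i)$ applied to $a_k = |\widehat{f}(k)|$ then supplies $r_j \in \mathcal{R}$ and $C > 0$ with $|\widehat{f}(k)| \leq C\E^{M_{r_j}(k)}$.

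Next I would invoke Komatsu's construction of ultrapolynomials of the form $P(z) = \prod_{p=1}^{\infty}(1 + z^2/a_p^2)$. Choosing $a_p$ proportional to $M_p/M_{p-1}$ after a rescaling produces, in the Beurling case, $P$ of class $(M_p)$ with $|P(\xi)| \geq c\,\E^{M(\mu\xi)}$ for real $\xi$ and $\mu$ as large as we please. In the Roumieu case, running the same construction with the weight sequence $M_p\prod_{j=0}^p s_j$ in place of $M_p$ (for a suitable $s_j \in \mathcal{R}$, refined via Lemma~\ref{M2sequence} if needed) yields $P$ of class $\{M_p\}$ with $|P(\xi)| \geq c\,\E^{M_{s_j}(\xi)}$, where $s_j$ is chosen to dominate both $r_j$ and a further sequence associated to $N_p$.

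With $P$ in hand I would define $g$ by $\widehat{g}(k) := \widehat{f}(k)/P(k)$; the division is legitimate because the zeros of $P$ lie on $\mathrm{i}\mathbb{R}$. The identity $\widehat{P(D)g}(k) = P(k)\widehat{g}(k)$ recorded in Sect.~\ref{section-preliminaries}, together with Proposition~\ref{coefficients2}, then yields $f = P(D)g$. Regularity of $g$ is verified through Proposition~\ref{coefficients1}. In the Beurling case, choosing $\mu \geq H\lambda$ (with $H$ the constant from $(M.2)$) and $h \leq \lambda$, Lemma~\ref{assumptionM2} gives $M(\lambda k) + M(hk) \leq M(\mu k) + \log A$, whence $|\widehat{g}(k)| \leq C'\E^{-M(hk)}$ and therefore $g \in \mathcal{E}_{2\pi}^{\{M_p\}} \subseteq \mathcal{E}_{2\pi}^{(N_p)}$ (the inclusion following from $M_p \prec N_p$). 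In the Roumieu case, $M_p \prec N_p$ together with \cite[Lemma~3.4]{Komatsu3} (cited within the proof of Lemma~\ref{projectivelemma}) provides $t_j \in \mathcal{R}$ such that $N(Lk) \leq M_{t_j}(k) + O(1)$ for every $L > 0$; combined with the lower bound on $|P(k)|$ coming from the chosen $s_j$, this gives $|\widehat{g}(k)| \leq C'_L\E^{-N(Lk)}$ for every $L > 0$, i.e., $g \in \mathcal{E}_{2\pi}^{(N_p)}$.

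The delicate point is the Roumieu construction of $P$: it must lie in the class $\{M_p\}$ yet satisfy a lower bound strong enough to absorb both the growth of $\widehat{f}$ (encoded by $r_j$) and the decay required for $g \in \mathcal{E}_{2\pi}^{(N_p)}$ (encoded by a sequence coming from $N_p$). Balancing these three sequences in $\mathcal{R}$ against the constants from $(M.1)$--$(M.2)$ is the main technical hurdle; Corollary~\ref{projectiveiso} and Lemma~\ref{M2sequence} together provide just enough flexibility to push it through.
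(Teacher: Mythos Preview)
Your proposal is correct and follows essentially the same route as the paper: reduce to Fourier coefficients via Propositions~\ref{coefficients1} and~\ref{coefficients2}, construct an ultrapolynomial $P$ with a suitable lower bound on the real line (in the Roumieu case combining a sequence $r_j$ coming from Lemma~\ref{projectivelemma}$(i)$ with one coming from \cite[Lemma~3.4]{Komatsu3} applied to $M_p\prec N_p$, adjusted through Lemma~\ref{M2sequence}), and set $\widehat{g}(k)=\widehat{f}(k)/P(k)$. The only cosmetic difference is that the paper writes $P$ explicitly as a power series $\sum (cz)^{2p}/M_{2p}$ (and in the Roumieu case as the product $P_1P_2$ of two such series, one for each of the two sequences), whereas you invoke the equivalent infinite-product form $\prod_p(1+z^2/a_p^2)$; the resulting lower bounds and the regularity check for $g$ are otherwise identical.
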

\begin{proof}
By Prop. \ref{coefficients1} and \ref{coefficients2} it suffices to show that for every $(c_k)_k \in s'^{(M_p)}(\mathbb{Z})$ ($(c_k)_k \in s'^{\{M_p\}}(\mathbb{Z})$) there is 
an ultrapolynomial $P$ of class $(M_p)$ (of class $\{M_p\}$) and $(a_k)_k \in s^{(N_p)}(\mathbb{Z})$ such that $c_k = P(k)a_k$ for all $k 
\in \mathbb{Z}$. In the Beurling case, we must show that it is possible to choose $(a_k)_k \in s^{\{M_p\}}(\mathbb{Z})$. Let $H$ be the constant occuring in $(M.2)$.

\emph{Beurling case}: There is $\lambda > 0$ such that $|c_k| \leq C\E^{M(\lambda k)}$ for all $k \in \mathbb{Z}$ and some $C>0$.
Define
$$
P(z) = \sum_{p = 0}^\infty \frac{(\lambda H^2z)^{2p}}{M_{2p}}\;, \qquad z \in \mathbb{C}\;.
$$
It is clear that $P$ is an ultrapolynomial of class $(M_p)$. Condition $(M.2)$ implies that $P(x) \geq C'\E^{2M(\lambda x)}$ for all $x \in \mathbb{R}$ and some $C'>0$. The ultrapolynomial $P$ and the sequence $a_k = 
c_k/P(k)$ satisfy the requirements.

\emph{Roumieu case}: Lemma \ref{projectivelemma}$(i)$ implies that there is $r_j \in \mathcal{R}$ such that $|c_k| \leq C\E^{M_{r_j}(
k)}$ for all $k \in \mathbb{Z}$ and some $C>0$. On the other hand, by \cite[Lemma 3.4]{Komatsu3} there is $k_j \in \mathcal{R}$ such that $M_p
\prod_{j=0}^pk_j \prec N_p$.
Let $r'_j, k'_j \in \mathcal{R}$ satisfy the conditions from Lemma \ref{M2sequence} with respect to $r_j$ and $k_j$, respectively.
Define
$$
P_1(z) = \sum_{p = 0}^\infty \frac{(2Hz)^{2p}}{\prod_{j=0}^{2p} r'_j M_{2p}}\;, \quad P_2(z) = \sum_{p = 0}^\infty \frac{(2Hz)^{2p}}
{\prod_{j=0}^{2p} k'_j M_{2p}}\;,  \qquad z \in \mathbb{C}\;.
$$
Set $P(z) = P_1(z)P_2(z)$. It is clear that $P$ is an ultrapolynomial of class $\{M_p\}$. Condition $(M.2)$ implies that $P(x) \geq C'\E^{M_{r'_j}(x) + M_{k'_j}(x)}$ for all  $x \in \mathbb{R}$ and some $C'>0$. The ultrapolynomial $P$ and the sequence $a_k = c_k/P(k)$ satisfy the requirements. \qed
\end{proof}
\section{Impossibility Result on the Multiplication \newline of Periodic Ultradistributions}\label{section-impossibility}
In this section we show an analogue of Schwartz' famous impossibility result \cite{Schwartz} for periodic ultradistributions. Recall 
that in the case of classical distributions it states that it is impossible to embed the space of distributions into an associative and commutative differential algebra in such a way that both differentiation, the unity function ($=$ constant function $1$) and the pointwise multiplication of continuous functions is preserved --- or more generally, $k$-times differentiable
functions for any $k \in \mathbb{N}$, see \cite[p.\ 7]{GGK}. In our impossibility result the role of the continuous functions is played
by a class of ultradifferentiable functions which is less regular than the class $\mathcal{E}_{2\pi}^{\ast}$. We assume in the rest of this section that $N_p$ is a weight sequence satisfying $(M.1)$. 
Recall that $\ast$ stands for $(M_p)$ or $\{M_p\}$. In addition, we write $\dagger$ for  $(N_p)$ or $\{N_p\}$. When embedding $\mathcal{E}_{2\pi}'^\ast$ into some associative and commutative algebra $
(\Lambda^{\ast,\dagger}, +, \circ) = \Lambda$ the following requirements seem to be natural:
\begin{enumerate}
\item[]$(i)$\, $\mathcal{E}_{2\pi}'^\ast$ is linearly embedded into $\Lambda$ and $f(x) \equiv 1$ is the unity in $\Lambda$.
\item[]$(ii)$\, For each ultradifferential operator $P(D)$ of class $\ast$ there is a linear operator $P(D):  \Lambda \rightarrow \Lambda
$ satisfying (cf.\ \eqref{Leibniz})
$$
P(D)(\E^{\imag kt} \circ f) = \E^{\imag kt} \circ P(D + k)f\;, \qquad f \in \Lambda\;, k \in \mathbb{Z}\;.
$$
Moreover, $P(D)_{|\mathcal{E}_{2\pi}'^\ast}$ coincides with 
the usual action of $P(D)$ on periodic ultradistributions of class $\ast$.
\item[]$(iii)$\, $\circ_{|\mathcal{E}_{2\pi}^\dagger \times \mathcal{E}_{2\pi}^\dagger}$ coincides with the pointwise product of functions.
\end{enumerate}
\begin{theorem} \label{impossibility}
Suppose that $M_p \prec N_p$. There are no associative and commutative algebras $\Lambda^{(M_p),(N_p)}$  and $\Lambda^{\{M_p\},(N_p)}$  
satisfying conditions $(i)$--$(iii)$. In the Beurling case, there is even no associative and commutative algebra $\Lambda^{(M_p),\{M_p\}}$  
satisfying conditions $(i)$--$(iii)$. 
\end{theorem}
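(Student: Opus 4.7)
The plan is to argue by contradiction using Theorem \ref{structure}. Suppose such an algebra $\Lambda$ exists; identify $\mathcal{E}_{2\pi}'^\ast$ with its image in $\Lambda$ under the embedding of $(i)$ and write $\circ$ for the algebra product. The main step is to establish
\begin{equation*}
\E^{\imag kt} \circ f = \E^{\imag kt} f \qquad (k \in \mathbb{Z},\ f \in \mathcal{E}_{2\pi}'^\ast), \tag{$\star$}
\end{equation*}
where the right-hand side denotes the ordinary multiplication of the ultradistribution $f$ by $\E^{\imag kt}$.

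To prove $(\star)$, apply Theorem \ref{structure} to write $f = P(D) g$ with $P(D)$ an ultradifferential operator of class $\ast$ and $g \in \mathcal{E}_{2\pi}^\dagger$. Since $\E^{-\imag kt}$ and $\E^{\imag kt} g$ both lie in $\mathcal{E}_{2\pi}^\dagger$, condition $(iii)$ yields $g = \E^{-\imag kt} \circ (\E^{\imag kt} g)$ in $\Lambda$. Applying $P(D)$, using $(ii)$ in the form $P(D)(\E^{-\imag kt} \circ h) = \E^{-\imag kt} \circ P(D-k) h$, then the classical intertwining $P(D-k)(\E^{\imag kt} g) = \E^{\imag kt} P(D) g$, and finally $(iii)$ and $(i)$ to collapse $\E^{\imag kt} \circ \E^{-\imag kt} = 1$, one computes
\begin{align*}
\E^{\imag kt} \circ f &= \E^{\imag kt} \circ P(D)(\E^{-\imag kt} \circ (\E^{\imag kt} g)) = \E^{\imag kt} \circ \E^{-\imag kt} \circ P(D-k)(\E^{\imag kt} g) \\
&= P(D-k)(\E^{\imag kt} g) = \E^{\imag kt} P(D) g = \E^{\imag kt} f.
\end{align*}
By linearity of the embedding, $(\star)$ extends to trigonometric polynomials: $\varphi \circ f = \varphi f$ for every finite sum $\varphi = \sum_k c_k \E^{\imag kt}$ and every $f \in \mathcal{E}_{2\pi}'^\ast$.

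For the contradiction I adapt Schwartz's trick to the circle. Take $\varphi(t) = \E^{\imag t} - 1$, a trigonometric polynomial with a simple zero at $t = 0$ modulo $2\pi$, and $u(t) = 1/(\E^{\imag t} - 1)$, which defines a classical distribution on the unit circle of principal-value type at $t = 0$; hence $u \in \mathcal{E}_{2\pi}'^\ast$. Classically $\varphi u = 1$ (the pole of $u$ is removed by the zero of $\varphi$) and $\varphi \delta_{2\pi} = \varphi(0) \delta_{2\pi} = 0$. Two applications of the identity from the previous paragraph give $\varphi \circ u = 1$ and $\varphi \circ \delta_{2\pi} = 0$, and associativity together with commutativity then force
$$\delta_{2\pi} = 1 \circ \delta_{2\pi} = (\varphi \circ u) \circ \delta_{2\pi} = u \circ (\varphi \circ \delta_{2\pi}) = u \circ 0 = 0,$$
contradicting $\delta_{2\pi} \neq 0$ and the injectivity of the embedding.

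The hard part is $(\star)$: since no topology is assumed on $\Lambda$ there is no way to pass from exponentials to arbitrary elements of $\mathcal{E}_{2\pi}'^\ast$ by density or continuity, and Theorem \ref{structure} is precisely what provides the purely algebraic substitute --- it pushes every ultradistribution into $P(D) \mathcal{E}_{2\pi}^\dagger$, where $(ii)$ and $(iii)$ can be combined. Once $(\star)$ is in hand, the Schwartz-type closing is formal.
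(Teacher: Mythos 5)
Your proof is correct and follows essentially the same route as the paper: your identity $(\star)$ is exactly the paper's key observation (obtained, as you do, from Theorem \ref{structure} combined with conditions $(ii)$ and $(iii)$), and your relations $\varphi u = 1$, $\varphi\delta = 0$ with $\varphi = \E^{\imag t}-1$ and $u$ a regularization of $1/\varphi$ are the same Schwartz trick the paper runs with $\cos = \sin\widetilde{\cot}$, $\sin\delta = 0$, $\cos\delta = \delta$. The only difference is the (equivalent) choice of trigonometric polynomial and regularized reciprocal; your write-up additionally spells out the computation behind $(\star)$ that the paper leaves implicit.
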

\begin{proof} 
Suppose $\Lambda^{\ast,\dagger} = \Lambda$ is such an algebra (we treat all cases at once). One can generalize Schwartz' original idea by making 
use of the the following observation:  $\E^{\imag kt} \circ f =  \E^{\imag kt} f$ for all $f \in \mathcal{E}_{2\pi}'^\ast$ and $k \in \mathbb{Z}
$; this readily follows from Thm. \ref{structure} and conditions $(ii)$ and $(iii)$.
Define 
$$
\widetilde{\cot}(t) = i +2i \sum^\infty_{k=1}\E^{-2\imag kt}\;,
$$
it is a regularization of the cotangent function in the space of periodic (ultra)distri-butions. The following relations clearly hold in $
\mathcal{E}_{2\pi}'^\ast$
$$
\cos\delta = \delta\;, \qquad \sin\delta  = 0\;, \qquad \cos = \sin \widetilde{\cot}\;.
$$
Since the sine and cosine are finite linear combinations of $\E^{\imag t}$ and $\E^{-\imag t}$, the remark at the beginning of the 
proof implies that
$$\delta = \cos\delta  =\cos \circ \delta =  (\sin \widetilde{\cot}) \circ \delta = (\sin \circ \widetilde{\cot}) \circ \delta = (\sin\delta) \circ 
\widetilde{\cot} = 0\;,
$$ 
contradicting the injectivity of $\mathcal{E}_{2\pi}'^\ast \to \Lambda$. \qed
\end{proof}
Despite this impossibility result, we shall construct algebras $\mathcal{G}_{2\pi}^{(M_p)}$ and $\mathcal{G}_{2\pi}^{\{M_p\}}$ that do satisfy properties $(i)$--$(iii)$ for $\ast = \dagger = (M_p)$ and $\ast = \dagger = \{M_p\}$, respectively.
\section{Algebras of Periodic Generalized Functions  \newline of Class $(M_p)$ and $\{M_p\}$}\label{section-algebra}
We now introduce algebras of periodic generalized functions as quotients of algebras consisting of sequences of ultradifferentiable functions satisfying appropriate growth conditions. Their construction is inspired by the theory of sequence space algebras \cite{Delcroix} (see also \cite{Delcroix2, Valmorin}). In this section we present some basic properties of these algebras. More concretely, we show a null characterization of the space of so called negligible elements, provide an alternative projective description of the algebra of Roumieu type (cf. Cor. \ref{projectiveiso}), define associated rings of generalized numbers and obtain a pointwise characterization of our generalized functions. The embedding of periodic ultradistributions is postponed to the next section. 

\subsection{Definition and Basic Properties} The spaces of \emph{moderate sequences of class} $\ast$  are defined as 
\begin{align*}
\mathcal{E}_{2\pi,\mathcal{M}}^{(M_p)} = \{ (f_n)_n \in {\mathcal{E}_{2\pi}^{(M_p)}}^{\mathbb{N}} \, | \,  &\forall h > 0,\exists \lambda > 0 \\
&\sup_{n \in \mathbb{N}}\|f_n\|_h \E^{-M(\lambda n)} < \infty\}\;,
\end{align*}
\begin{align*}
\mathcal{E}_{2\pi,\mathcal{M}}^{\{M_p\}} = \{ (f_n)_n \in {\mathcal{E}_{2\pi}^{\{M_p\}}}^{\mathbb{N}} \, | \,  &\forall \lambda > 0,\exists h > 0\\ 
&\sup_{n \in \mathbb{N}}\|f_n\|_h \E^{-M(\lambda n)} < \infty\}\;,
\end{align*}
and the spaces of \emph{negligible sequences of class} $\ast$ as
\begin{align*}
\mathcal{E}_{2\pi, \mathcal{N}}^{(M_p)} = \{ (f_n)_n \in {\mathcal{E}_{2\pi}^{(M_p)}}^{\mathbb{N}} \, | \,  &\forall h > 0,\forall \lambda > 0 \\
&\sup_{n \in \mathbb{N}}\|f_n\|_h \E^{M(\lambda n)} < \infty\}\;,
\end{align*}
\begin{align*}
\mathcal{E}_{2\pi, \mathcal{N}}^{\{M_p\}} = \{ (f_n)_n \in {\mathcal{E}_{2\pi}^{\{M_p\}}}^{\mathbb{N}} \, | \,  &\exists \lambda > 0,\exists h > 0\\ 
&\sup_{n \in \mathbb{N}}\|f_n\|_h \E^{M(\lambda n)} < \infty\}\;.
\end{align*}\begin{remark}\label{def-Roumieu}
In the Beurling case our definition fits into the general framework of \cite{Delcroix} while this is not so for the Roumieu case --- the difference lies in the fact that the choice  and order of quantifiers in our definition is completely different. It is important to point out that this will play an essential role when embedding the space of periodic ultradistributions and preserving the product of periodic ultradifferentiable functions (see Sect. \ref{section-embedding}). In particular, for the case of hyperfunctions on the unit circle our definition differs from the one given in \cite{Delcroix2, Delcroix, Valmorin}.
\end{remark}
 Lemma \ref{assumptionM2} ensures that $\mathcal{E}_{2\pi,\mathcal{M}}^\ast$ is an algebra under pointwise operations of sequences and that $\mathcal{E}_{2\pi,\mathcal{N}}^\ast$ is an ideal of it. Hence, we can define the algebra $\mathcal{G}_{2\pi}^\ast$ of \emph{periodic generalized functions of class $\ast$} as the factor algebra
$$
\mathcal{G}_{2\pi}^\ast = \mathcal{E}_{2\pi,\mathcal{M}}^\ast/\mathcal{E}_{2\pi,\mathcal{N}}^\ast\;. 
$$
We denote the equivalence class of $(f_n)_n \in  \mathcal{E}_{2\pi,\mathcal{M}}^\ast$ by $[(f_n)_n]$. Observe that $\mathcal{E}_{2\pi}^\ast$ can be regarded as a subalgebra of $\mathcal{G}_{2\pi}^\ast$ via the constant embedding
$$
\sigma(f) := [(f)_n]\;, \qquad f \in  \mathcal{E}_{2\pi}^\ast\;.
 $$
We also remark that $\mathcal{G}_{2\pi}^\ast$ can be endowed with a canonical action of ultradifferential operators of class $\ast$: The spaces $\mathcal{E}_{2\pi,\mathcal{M}}^\ast$ and $\mathcal{E}_{2\pi,\mathcal{N}}^\ast$ are closed under ultradifferential operators $P(D)$ of class $\ast$ if we define their actions on sequences as $P(D)((f_n)_n) := (P(D)f_n)_n$. Consequently, every ultradifferential operator $P(D)$ of class $\ast$ induces a linear operator
$$
P (D) :  \mathcal{G}_{2\pi}^\ast \to \mathcal{G}_{2\pi}^\ast\;,
$$
which clearly satisfies the generalized Leibniz' rule \eqref{Leibniz} for any $f \in \mathcal{G}_{2\pi}^\ast$ and $\E^{\imag kt}$, $k \in \mathbb{Z}$, identified with $\sigma(\E^{\imag kt})$. 

We can also define spaces of moderate and negligible sequences  based on the spaces $s^\ast(\mathbb{Z})$
\begin{align*}
s_\mathcal{M}^{(M_p)}(\mathbb{Z}) = \{ (c_{k,n})_{k,n} \in {s^{(M_p)}(\mathbb{Z})}^{\mathbb{N}} \, | \,  &\forall h > 0,\exists \lambda > 0\\ 
&\sup_{n \in \mathbb{N}}\sigma_h((c_{k,n})_k) \E^{-M(\lambda n)} < \infty \}\;,
\end{align*}
\begin{align*}
s_\mathcal{M}^{\{M_p\}}(\mathbb{Z}) = \{ (c_{k,n})_{k,n} \in  {s^{\{M_p\}}(\mathbb{Z})}^{\mathbb{N}} \, | \,  &\forall \lambda > 0,\exists h > 0\\ 
&\sup_{n \in \mathbb{N}}\sigma_h((c_{k,n})_k) \E^{-M(\lambda n)} < \infty\}\;,
\end{align*}
and  
\begin{align*}
s_\mathcal{N}^{(M_p)}(\mathbb{Z}) = \{ (c_{k,n})_{k,n} \in {s^{(M_p)}(\mathbb{Z})}^{\mathbb{N}} \, | \,  &\forall h > 0,\forall \lambda > 0\\ 
&\sup_{n \in \mathbb{N}}\sigma_h((c_{k,n})_k) \E^{M(\lambda n)} < \infty \}\;,
\end{align*}
\begin{align*}
s_\mathcal{N}^{\{M_p\}}(\mathbb{Z}) = \{ (c_{k,n})_{k,n} \in  {s^{\{M_p\}}(\mathbb{Z})}^{\mathbb{N}} \, | \,  &\exists \lambda > 0,\exists h > 0\\ 
&\sup_{n \in \mathbb{N}}\sigma_h((c_{k,n})_k) \E^{M(\lambda n)} < \infty\}\;.
\end{align*}
Proposition \ref{coefficients1} implies the following simple but useful lemma:
\begin{lemma}\label{coefficients4}
Let $(f_n)_n \in {\mathcal{E}_{2\pi}^\ast}^{\mathbb{N}}$. Then 
\begin{enumerate}
\item[]$(i)$\, $(f_n)_n  \in \mathcal{E}_{2\pi,\mathcal{M}}^\ast$ if and only if $(\widehat{f}_n(k))_{k,n} \in s_\mathcal{M}^\ast(\mathbb{Z})$. 
\item[]$(ii)$\, $(f_n)_n  \in \mathcal{E}_{2\pi,\mathcal{N}}^\ast$ if and only if $(\widehat{f}_n(k))_{k,n} \in s_\mathcal{N}^\ast(\mathbb{Z})$. 
\end{enumerate}
\end{lemma}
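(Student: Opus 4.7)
The lemma is a uniform-in-$n$ refinement of Prop. \ref{coefficients1}, and my plan is to obtain it by extracting quantitative constants from the pointwise proof of that proposition and then checking that these constants are compatible with the quantifier structure defining the moderate and negligible spaces.

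First, I would isolate from the proof of Prop. \ref{coefficients1} the two explicit one-function estimates: for any $h>0$ and $\varphi\in\mathcal{E}_{2\pi}^{M_p,h}$,
$$
\sigma_h\bigl((\widehat{\varphi}(k))_k\bigr)\leq \|\varphi\|_h,
$$
while, conversely, combining the displayed series estimate with $(M.2)$ (so that $M_{p+2}\leq AM_2H^{p+2}M_p$) yields a constant $C>0$, depending only on $M_p$, such that for every $\lambda>0$ and every $(c_k)_k\in s^{M_p,\lambda}(\mathbb{Z})$,
$$
\Bigl\|\sum_{k\in\mathbb{Z}}c_k\E^{\imag kt}\Bigr\|_{\lambda/H}\leq \frac{C}{\lambda^2}\,\sigma_\lambda\bigl((c_k)_k\bigr).
$$
The crucial feature is that the forward estimate preserves the parameter $h$, whereas the inverse estimate shifts it from $\lambda$ down to $\lambda/H$; both are entirely uniform in the function/sequence.

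Second, I would apply these two estimates term-wise to each $f_n$. Multiplying by $\E^{\mp M(\lambda n)}$ and taking $\sup_n$ immediately yields the forward implications in (i) and (ii), in both the Beurling and the Roumieu cases, without altering the quantifier pattern. For the reverse implications one uses the rephrased inverse estimate $\|f_n\|_h\leq C_h\,\sigma_{hH}\bigl((\widehat{f}_n(k))_k\bigr)$ and absorbs the factor $H$ in the choice of parameters: in the Beurling case of (i), given $h>0$ apply the hypothesis with $hH$ to produce the required $\lambda$; in the Roumieu case of (i), given $\lambda>0$ the hypothesis provides some $h'>0$ and one sets $h:=h'/H$; the negligible case (ii) is symmetric, since in the Beurling regime both $h$ and $\lambda$ are universally quantified and in the Roumieu regime both are existentially quantified, so the admissible sets of parameters are stable under $h\mapsto h/H$ and $h\mapsto hH$.

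I do not foresee a genuine obstacle here; the whole argument is an exercise in converting the continuity of the Fourier isomorphism from Prop. \ref{coefficients1} into uniform estimates and threading the $H$-shift through the quantifier patterns in the definitions of $\mathcal{E}_{2\pi,\mathcal{M}}^{\ast}$, $\mathcal{E}_{2\pi,\mathcal{N}}^{\ast}$, $s_{\mathcal{M}}^{\ast}(\mathbb{Z})$ and $s_{\mathcal{N}}^{\ast}(\mathbb{Z})$. The only point that really needs verification is that in each of the four combinations of case (Beurling/Roumieu) and regime (moderate/negligible) the $H$-shift is harmless, which is precisely the content of the quantifier bookkeeping in the previous paragraph.
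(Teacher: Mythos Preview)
Your proposal is correct and is exactly the intended argument: the paper itself gives no proof beyond the single sentence ``Proposition~\ref{coefficients1} implies the following simple but useful lemma'', and what you have written is precisely the unpacking of that sentence---extracting the two uniform norm inequalities hidden in the proof of Prop.~\ref{coefficients1} and checking that the fixed $H$-shift in the inverse estimate is compatible with each of the four quantifier patterns.
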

We now show the null characterization of the ideal $\mathcal{E}_{2\pi,\mathcal{N}}^\ast$. 
\begin{proposition} \label{nullchar}
Let $(f_n)_n \in \mathcal{E}_{2\pi,\mathcal{M}}^\ast$. Then, $(f_n)_n \in \mathcal{E}_{2\pi,\mathcal{N}}^\ast$ if and only if 
$$
\sup_{n \in \mathbb{N}} \|f_n\|_{L^\infty(\mathbb{R})}\E^{M(\lambda n)} < \infty\;. 
$$ 
for all $\lambda > 0$ (for some $\lambda > 0$).
\end{proposition}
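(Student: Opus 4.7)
The forward direction is trivial: taking $p=0$ in the definition of $\|\cdot\|_{h}$ gives $\|f_n\|_{L^\infty(\mathbb{R})}\le \|f_n\|_{h}$ for every $h>0$, so negligibility of $(f_n)_n$ implies the stated sup-norm decay.

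For the converse, my plan is to transfer the problem to Fourier coefficients via Lemma \ref{coefficients4} and exploit a geometric-mean interpolation between the sup-norm hypothesis and moderateness. The point is that one has two competing bounds at one's disposal: the trivial integral estimate $|\widehat{f}_n(k)|\le \|f_n\|_{L^\infty(\mathbb{R})}$, and, from the very definition of $\|\cdot\|_h$ together with $\sup_p (h|k|)^p/M_p = \E^{M(hk)}$, the bound $|\widehat{f}_n(k)|\le \|f_n\|_h \E^{-M(hk)}$. Their geometric mean yields
$$
|\widehat{f}_n(k)|\;\le\; \|f_n\|_{L^\infty(\mathbb{R})}^{1/2}\,\|f_n\|_h^{1/2}\,\E^{-M(hk)/2},\qquad k\in\mathbb{Z},\; h>0.
$$
Inserting the assumed decay of $\|f_n\|_{L^\infty(\mathbb{R})}$ and the moderateness estimate for $\|f_n\|_h$, the task reduces to showing that, given an arbitrary target decay $\E^{-M(\tilde\mu n)-M(\tilde h k)}$, the parameters can be chosen so the resulting exponent dominates it; this is a routine juggling of Lemma \ref{assumptionM2} in its iterated form $M(H^j t)\ge 2^j M(t)-(2^j-1)\log A$.

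Concretely, in the Beurling case fix $\tilde h,\tilde\mu>0$ arbitrary: first choose $h$ with $h/H\ge 2\tilde h$ (so that $M(hk)/2\ge M(\tilde h k)+O(1)$ via Lemma \ref{assumptionM2}), producing $\lambda_h$ by moderateness; then pick $\mu=H^2\max(\tilde\mu,\lambda_h)$ so that $M(\mu n)-M(\lambda_h n)\ge 2M(\tilde\mu n)-C$. The displayed inequality then gives $|\widehat{f}_n(k)|\le C' \E^{-M(\tilde\mu n)-M(\tilde h k)}$, and Lemma \ref{coefficients4} yields $(f_n)_n\in\mathcal{E}_{2\pi,\mathcal{N}}^{(M_p)}$. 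In the Roumieu case the order of quantifiers is dictated by the hypothesis: a fixed $\mu_0$ is supplied with $\|f_n\|_{L^\infty(\mathbb{R})}\le C\E^{-M(\mu_0 n)}$, while moderateness provides some $h_\lambda$ for every $\lambda>0$; one then picks $\lambda$ so small (e.g. $\lambda=\mu_0/H^2$) that $M(\mu_0 n)-M(\lambda n)$ still dominates $2M(\tilde\mu n)-C$ for some $\tilde\mu>0$, and sets $\tilde h=h_\lambda/(2H)$.

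The main (and only) delicate point is this quantifier-juggling, particularly in the Roumieu case, where neither $h$ nor the target decay $\tilde\mu$ is free: the former is forced by moderateness and the latter must be produced rather than prescribed. Once the parameters are chosen in the correct order, the proof is a direct application of Lemma \ref{assumptionM2} and Lemma \ref{coefficients4}.
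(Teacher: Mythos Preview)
Your argument is correct and takes a genuinely different route from the paper. The paper works directly with the derivative norms and invokes the Landau--Kolmogorov inequality
\[
\|D^{p}f_n\|_{L^\infty(\mathbb{R})}\le 2\pi\,\|f_n\|_{L^\infty(\mathbb{R})}^{1/2}\,\|D^{2p}f_n\|_{L^\infty(\mathbb{R})}^{1/2},
\]
then uses $(M.2)$ in the form $M_{2p}^{1/2}\le A^{1/2}H^{p}M_{p}$ to turn the $M_{2p}$ appearing in the moderateness bound for $\|D^{2p}f_n\|_{L^\infty}$ back into $M_p$. You instead pass to Fourier coefficients via Lemma~\ref{coefficients4} and interpolate at that level: the geometric mean of the trivial bound $|\widehat{f}_n(k)|\le\|f_n\|_{L^\infty}$ and the moderateness bound $|\widehat{f}_n(k)|\le\|f_n\|_{h}\E^{-M(hk)}$ plays exactly the role that Landau--Kolmogorov plays in the paper's argument. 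The quantifier bookkeeping you outline (in both cases) is correct, and the appeal to Lemma~\ref{assumptionM2} in the iterated form $M(H^{j}t)\ge 2^{j}M(t)-(2^{j}-1)\log A$ is precisely what is needed to absorb the factors of $\tfrac12$ in the exponents.

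What each approach buys: your argument is more self-contained in this periodic setting, since it avoids importing an external interpolation inequality and replaces it with the trivial fact $\min(a,b)\le\sqrt{ab}$ applied to two Fourier-coefficient estimates already established in Proposition~\ref{coefficients1}. The paper's argument, on the other hand, does not rely on the Fourier-series structure and would transfer verbatim to a non-periodic setting (e.g.\ ultradistributions on $\mathbb{R}^n$), where no analogue of Lemma~\ref{coefficients4} is available.
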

\begin{proof}
The proof is based on the Landau-Kolmogorov inequality \cite{Kolmogorov} on the real line: For all $0 < p < m \in \mathbb{N}$ we have
\begin{equation}
\|D^pf\|_{L^\infty(\mathbb{R})} \leq 2\pi \|f\|_{L^\infty(\mathbb{R})}^{1-p/m} \|D^mf\|_{L^\infty(\mathbb{R})}^{p/m}\;, \qquad f \in C^m(\mathbb{R})\;,
\label{Landau}
\end{equation}
provided $D^nf \in L^\infty(\mathbb{R})$ for $n= 0, \ldots, m$. For every $h > 0$ there is $\lambda > 0$ (for every $\lambda > 0$ there is $h > 0$) such that
$$
\|D^pf_n\|_{L^\infty(\mathbb{R})} \leq \frac{C\E^{M(\lambda n)} M_p}{h^p}\;, \qquad p,n \in \mathbb{N}\;,
$$
for some $C = C_{\lambda,h} > 0$. The assumption yields that for every $\mu > 0$ (some $\mu >0$) 
$$
\|f_n\|_{L^\infty(\mathbb{R})} \leq C'\E^{-M(\mu n)}\;, \qquad n \in \mathbb{N}\;,
$$
for some $C' = C'_{\mu} > 0$. By applying inequality \eqref{Landau} with $m = 2p$, $p > 0$, we obtain that 
\begin{align*}
\|D^pf_n\|_{L^\infty(\mathbb{R})} &\leq 2\pi \|f_n\|_{L^\infty(\mathbb{R})}^{1/2} \|D^{2p}f\|_{L^\infty(\mathbb{R})}^{1/2}\\
&\leq \frac{2\pi (CC')^{1/2}M^{1/2}_{2p}}{h^p}\exp\left(\frac{M(\lambda n) - M(\mu n)}{2}\right), \qquad p,n \in \mathbb{N}.
\end{align*}
The result follows from Lemma \ref{assumptionM2} and the fact that $(M.2)$ implies that $M_{2p}^{1/2} \leq A^{1/2}H^p M_p$, $p \in \mathbb{N}$. \qed
\end{proof}
\begin{corollary}\label{nullcharseq}
Let $(c_{k,n})_{k,n} \in s_\mathcal{M}^\ast(\mathbb{Z})$. Then, $(c_{k,n})_{k,n}\in s_\mathcal{N}^\ast(\mathbb{Z})$ if and only if 
$$
\sup_{n \in \mathbb{N}}\sup_{k \in \mathbb{Z}} |c_{k,n}|\E^{M(\lambda n)} < \infty\;, 
$$ 
for all $\lambda > 0$ (for some $\lambda > 0$).
\end{corollary}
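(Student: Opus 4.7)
My plan is to deduce the corollary from Proposition \ref{nullchar} via the Fourier coefficient isomorphism recorded in Lemma \ref{coefficients4}. Starting from $(c_{k,n})_{k,n} \in s_\mathcal{M}^\ast(\mathbb{Z})$, I form the associated sequence $f_n(t) = \sum_{k\in\mathbb{Z}} c_{k,n} \E^{\imag kt}$, which by Lemma \ref{coefficients4}(i) lies in $\mathcal{E}_{2\pi,\mathcal{M}}^\ast$, and by part (ii) of the same lemma the membership $(c_{k,n})_{k,n} \in s_\mathcal{N}^\ast(\mathbb{Z})$ is equivalent to $(f_n)_n \in \mathcal{E}_{2\pi,\mathcal{N}}^\ast$. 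Proposition \ref{nullchar} characterises the latter in terms of $L^\infty$-norms, so the task reduces to showing that, modulo moderation, the coefficient hypothesis $\sup_{n,k}|c_{k,n}|\E^{M(\lambda n)} < \infty$ is equivalent to $\sup_n \|f_n\|_{L^\infty(\mathbb{R})} \E^{M(\lambda n)} < \infty$ with the identical quantifier pattern on $\lambda$.

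The passage from the $L^\infty$-bound to the coefficient bound is immediate: $|c_{k,n}| = |\widehat{f_n}(k)| \leq \|f_n\|_{L^\infty(\mathbb{R})}$ for every $k \in \mathbb{Z}$. For the converse I use the crude estimate $\|f_n\|_{L^\infty(\mathbb{R})} \leq \sum_{k} |c_{k,n}|$ together with an interpolation of the two available bounds on $|c_{k,n}|$. Splitting $|c_{k,n}| = |c_{k,n}|^{1/2}\cdot|c_{k,n}|^{1/2}$, I apply the hypothesis $|c_{k,n}| \leq C_\mu \E^{-M(\mu n)}$ to one half-factor (yielding $n$-decay) and the moderation bound $|c_{k,n}| \leq \sigma_h((c_{k,n})_k)\E^{-M(hk)}$ to the other (yielding $k$-decay), and then sum in $k$. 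The resulting series $\sum_k \E^{-M(hk)/2}$ converges by $(M.2)$ (via Lemma \ref{assumptionM2}), producing
$$
\|f_n\|_{L^\infty(\mathbb{R})} \leq C\,\sigma_h((c_{k,n})_k)^{1/2}\,\E^{-M(\mu n)/2}.
$$

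The main obstacle is arranging the quantifiers so that this bound, combined with the moderation estimate $\sigma_h((c_{k,n})_k) \leq C_h\E^{M(\lambda_h n)}$, dominates $\E^{-M(\lambda n)}$ for the prescribed range of $\lambda$. This is resolved by iterating Lemma \ref{assumptionM2} to obtain $M(H^2 t) \geq 4 M(t) - 3\log A$, which allows one to absorb any constant multiple appearing in the argument of $M$. In the Beurling case I fix $h$, get $\lambda_h$ from moderation, and for any prescribed $\lambda > 0$ invoke the universal quantifier in the hypothesis to pick $\mu \geq H^2\max(\lambda,\lambda_h)$. In the Roumieu case, the hypothesis supplies a single $\mu$; I then use the universal quantifier on $\lambda$ in the Roumieu moderation to select $\lambda = \mu/H^2$, obtain the matching $h_\lambda$, and set $\nu = \mu/H^2$ as the exponent witnessing the Roumieu version of the conclusion. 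In both cases Proposition \ref{nullchar} applied to $(f_n)_n$ closes the argument.
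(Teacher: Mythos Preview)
Your proposal is correct and follows the route the paper intends: the corollary is stated without proof, as an immediate consequence of Proposition~\ref{nullchar} through the Fourier isomorphism of Lemma~\ref{coefficients4}, and your argument spells out precisely this passage.

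The only part that is not entirely mechanical is the direction from the coefficient bound to the $L^\infty$-bound, and your interpolation $|c_{k,n}| = |c_{k,n}|^{1/2}|c_{k,n}|^{1/2}$ handles it cleanly; the quantifier bookkeeping in both the Beurling and Roumieu cases is accurate. One minor remark: the convergence of $\sum_k \E^{-M(hk)/2}$ can be seen directly from the definition of $M$ (take $p=4$ in $\E^{-M(t)} \leq M_p t^{-p}$), so invoking $(M.2)$ or Lemma~\ref{assumptionM2} there is not strictly necessary, though it does no harm. It is also worth noting that the same interpolation trick could be applied directly to $\sigma_h((c_{k,n})_k)$ without ever forming $f_n$, thereby bypassing Proposition~\ref{nullchar} and Landau--Kolmogorov altogether; this would give a marginally more self-contained proof, but your detour through $(f_n)_n$ is exactly what the label ``Corollary'' suggests and is entirely valid.
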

\subsection{Projective Description of $\mathcal{G}_{2\pi}^{\{M_p\}}$}\label{subsection-projective}
 We give an alternative projective description of the algebra of periodic generalized functions of Roumieu type. As in Sect. \ref{section-structure} this shall be done by using the family $\mathcal{R}$ of all non-decreasing sequences tending to infinity. We define projective type spaces of moderate and negligible sequences by means of the seminorms \eqref{seminorms}
 as
\begin{align*}
\tilde{\mathcal{E}}^{\{M_p\}}_{2\pi,\mathcal{M}}= \{ (f_n)_n \in {\mathcal{E}_{2\pi}^{\{M_p\}}}^{\mathbb{N}} \, | \, &\forall r_j \in \mathcal{R},\exists s_j \in \mathcal{R} \\
&\sup_{n \in \mathbb{N}}\|f_n\|_{r_j} \E^{-M_{s_j}(n)} < \infty \}\;,
\end{align*}
and
\begin{align*}
\tilde{\mathcal{E}}^{\{M_p\}}_{2\pi,\mathcal{N}}= \{ (f_n)_n \in {\mathcal{E}_{2\pi}^{\{M_p\}}}^{\mathbb{N}} \, | \, &\forall r_j \in \mathcal{R},\forall s_j \in \mathcal{R} \\
&\sup_{n \in \mathbb{N}}\|f_n\|_{r_j} \E^{M_{s_j}(n)} < \infty \}\;.
\end{align*}
Likewise, we define 
\begin{align*}
\tilde{s}_\mathcal{M}^{\{M_p\}}(\mathbb{Z}) = \{ (c_{k,n})_{k,n} \in  {s^{\{M_p\}}(\mathbb{Z})}^{\mathbb{N}} \, | \, &\forall r_j \in \mathcal{R}, \exists s_j \in \mathcal{R}\\
&\sup_{n \in \mathbb{N}}\sigma_{r_j}((c_{k,n})_k) \E^{-M_{s_j}(n)} < \infty \} \;,
\end{align*}
and
\begin{align*}
\tilde{s}_\mathcal{N}^{\{M_p\}}(\mathbb{Z}) = \{ (c_{k,n})_{k,n} \in  {s^{\{M_p\}}(\mathbb{Z})}^{\mathbb{N}} \, | \, &\forall r_j \in \mathcal{R}, \forall s_j \in \mathcal{R}\\
&\sup_{n \in \mathbb{N}}\sigma_{r_j}((c_{k,n})_k) \E^{M_{s_j}(n)} < \infty \} \;.
\end{align*}
The goal of this subsection is to show the following result:
\begin{proposition}\label{projective}
We have 
$$
\mathcal{E}^{\{M_p\}}_{2\pi,\mathcal{M}} = \tilde{\mathcal{E}}^{\{M_p\}}_{2\pi,\mathcal{M}}\;, \quad \mathcal{E}^{\{M_p\}}_{2\pi,\mathcal{N}} =  \tilde{\mathcal{E}}^{\{M_p\}}_{2\pi,\mathcal{N}}\;,
$$
and
$$
s^{\{M_p\}}_{\mathcal{M}}(\mathbb{Z}) = \tilde{s}^{\{M_p\}}_{\mathcal{M}}(\mathbb{Z})\;, \quad s^{\{M_p\}}_{\mathcal{N}}(\mathbb{Z}) =  \tilde{s}^{\{M_p\}}_{\mathcal{N}}(\mathbb{Z})\;.
$$
\end{proposition}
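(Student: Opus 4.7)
My plan is to first reduce the four equalities to their sequence-space versions, and then handle the moderate and negligible cases separately by means of the two parts of Lemma \ref{projectivelemma}. An inspection of the proof of Proposition \ref{coefficients1}, combined with the isomorphism \eqref{coefficients3} and Lemma \ref{M2sequence}, shows that the Fourier coefficient map intertwines the seminorms $\|\cdot\|_{r_j}$ with $\sigma_{r_j}$ up to constants depending only on $r_j$. This yields a bijective correspondence $\tilde{\mathcal{E}}^{\{M_p\}}_{2\pi,\mathcal{M}} \leftrightarrow \tilde{s}^{\{M_p\}}_{\mathcal{M}}(\mathbb{Z})$ and the analogous one for the negligible spaces, and together with Lemma \ref{coefficients4} it reduces the function-space equalities to the corresponding sequence-space equalities, which I now address.

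Both $\subseteq$-inclusions rest on the following elementary observation, which I would establish first: for every $h > 0$ and every $r_j \in \mathcal{R}$ there exists $L = L_{h,r_j} > 0$ with $M_{r_j}(t) \leq M(ht) + L$ for all $t \geq 0$. Indeed, since $r_j \to \infty$ we have $r_j \geq 1/h$ for $j \geq p_0$, which on the range $p \geq p_0$ gives $t^p/(M_p \prod_{j=0}^{p}r_j) \leq C_1(ht)^p/M_p$; the finitely many terms with $p < p_0$ contribute at most a polynomial in $t$, eventually absorbed into $L$ because $M(ht)$ grows faster than any polynomial.

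For the moderate case, in the direction $s^{\{M_p\}}_{\mathcal{M}}(\mathbb{Z}) \subseteq \tilde{s}^{\{M_p\}}_{\mathcal{M}}(\mathbb{Z})$, I fix $r_j \in \mathcal{R}$, set $a_n := \sigma_{r_j}((c_{k,n})_k)$, and for each $\lambda' > 0$ pick the $h = h_{\lambda'}$ provided by the hypothesis; the key comparison then yields $a_n \leq C_{\lambda'} \E^{M(\lambda' n)}$ uniformly in $n$, and since this holds for every $\lambda' > 0$, Lemma \ref{projectivelemma}$(i)$ supplies $s_j \in \mathcal{R}$ with $\sup_n a_n \E^{-M_{s_j}(n)} < \infty$. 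For the reverse inclusion, I fix $\lambda > 0$ and define $b_k := \sup_n |c_{k,n}| \E^{-M(\lambda n)}$; given $r_j \in \mathcal{R}$ the tilde hypothesis produces some $s_j$, and the key comparison applied to this $s_j$ with $h = \lambda$ gives $M_{s_j}(n) - M(\lambda n) \leq L_{\lambda,s_j}$, whence $\sup_k b_k \E^{M_{r_j}(k)} < \infty$ for every $r_j$. Lemma \ref{projectivelemma}$(ii)$ (applied to $(b_k)_{k \geq 0}$ and to the symmetric sequence for negative $k$) then yields $h > 0$ with $\sup_k b_k \E^{M(hk)} < \infty$, which is precisely $(c_{k,n}) \in s^{\{M_p\}}_{\mathcal{M}}(\mathbb{Z})$.

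For the negligible case the direction $\subseteq$ is immediate: the key comparison applied separately in $k$ and $n$ turns $|c_{k,n}| \leq C \E^{-M(hk) - M(\lambda n)}$ into $|c_{k,n}| \leq C' \E^{-M_{r_j}(k) - M_{s_j}(n)}$ for arbitrary $r_j, s_j \in \mathcal{R}$. For the reverse inclusion I apply the tilde hypothesis with $r_j = s_j = r'_j$, where $r'_j$ is obtained from Lemma \ref{M2sequence}; using $M_{r'_j}(k) + M_{r'_j}(n) \geq M_{r'_j}(\max(|k|,n))$ the sequence $b_N := \sup_{\max(|k|,n) = N}|c_{k,n}|$ satisfies $\sup_N b_N \E^{M_{r_j}(N)} < \infty$ for every $r_j \in \mathcal{R}$ (since one picks $r'_j \leq r_j$ via Lemma \ref{M2sequence}, which gives $M_{r'_j} \geq M_{r_j}$). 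Lemma \ref{projectivelemma}$(ii)$ now supplies $\lambda > 0$ with $b_N \leq C \E^{-M(\lambda N)}$, and the sub-additive estimate from Lemma \ref{assumptionM2} applied to $M(\lambda\max(|k|,n))$ splits this into $|c_{k,n}| \leq C' \E^{-M(\lambda |k|/H) - M(\lambda n/H)}$, placing $(c_{k,n})$ in $s^{\{M_p\}}_{\mathcal{N}}(\mathbb{Z})$. The hardest step, I expect, will be the $\supseteq$ direction in the moderate case: only there does the quantifier structure force one to extract uniformity in $n$ from a family of bounds whose parameters a priori depend on $n$, and the device of forming the one-variable sequence $b_k$ and exploiting the unavoidable divergence of $s_j$ is what reduces that extraction to a one-variable application of Lemma \ref{projectivelemma}$(ii)$.
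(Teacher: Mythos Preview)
Your proof is correct and, for the moderate equalities, follows the paper's argument essentially verbatim: the reduction to sequence spaces via \eqref{coefficients3} and Lemma~\ref{coefficients4} is the same, and your ``key comparison'' $M_{r_j}(t)\le M(ht)+L$ is exactly the continuity of $\sigma_{r_j}$ on each $s^{M_p,h}(\mathbb{Z})$ that the paper invokes before applying the two parts of Lemma~\ref{projectivelemma}. The one genuine difference lies in the reverse inclusion for the negligible spaces. The paper first notes $\tilde{s}^{\{M_p\}}_{\mathcal{N}}(\mathbb{Z})\subseteq \tilde{s}^{\{M_p\}}_{\mathcal{M}}(\mathbb{Z})=s^{\{M_p\}}_{\mathcal{M}}(\mathbb{Z})$ and then appeals to Corollary~\ref{nullcharseq} together with Lemma~\ref{projectivelemma}$(ii)$; since Corollary~\ref{nullcharseq} is obtained from Proposition~\ref{nullchar}, this route ultimately rests on the Landau--Kolmogorov inequality. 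Your direct argument via the diagonal sequence $b_N=\sup_{\max(|k|,n)=N}|c_{k,n}|$ and the splitting $M(\lambda|k|/H)+M(\lambda n/H)\le M(\lambda\max(|k|,n))+\log A$ (Lemma~\ref{assumptionM2}) bypasses that machinery entirely and is more self-contained at the sequence-space level. One small remark: Lemma~\ref{M2sequence} is not actually needed in that step, since the inequality $M_{r_j}(k)+M_{r_j}(n)\ge M_{r_j}(\max(|k|,n))$ follows from nonnegativity and monotonicity of $M_{r_j}$ alone, so you may take $r'_j=r_j$ throughout.
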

\begin{remark}\label{justification}
Since the structure (choice and order of quantifiers) of $\tilde{\mathcal{E}}^{\{M_p\}}_{2\pi,\mathcal{M}}$ and $\tilde{\mathcal{E}}^{\{M_p\}}_{2\pi, \mathcal{N}}$ coincides with the structure of the widely accepted definition for spaces of moderate and negligible sequences based on an arbitrary locally convex space \cite{Delcroix}, Prop. \ref{projective} may serve as a justification for our definitions in the Roumieu case.
\end{remark}
\begin{proof}
The isomorphism \eqref{coefficients3} yields the following: For $(f_n)_n \in {\mathcal{E}_{2\pi}^{\{M_p\}}}^\mathbb{N}$ we have $(f_n)_n \in \tilde{\mathcal{E}}^{\{M_p\}}_{2\pi, \mathcal{M}}$ if and only if $(\widehat{f}_n(k))_{k,n} \in \tilde{s}^{\{M_p\}}_{\mathcal{M}}(\mathbb{Z})$. A similar statement holds for the null ideals. By Lemma \ref{coefficients4} it therefore suffices to show the second pair of equalities. We start by proving $s^{\{M_p\}}_{\mathcal{M}}(\mathbb{Z}) =  \tilde{s}^{\{M_p\}}_{\mathcal{M}}(\mathbb{Z})$. Let $(c_{k,n})_{k,n} \in s^{\{M_p\}}_{\mathcal{M}}(\mathbb{Z})$ and fix $r_j \in \mathcal{R}$. Since the seminorm $\sigma_{r_j}$ is continuous on $s^{M_p,h} (\mathbb{Z})$ for each $h >0$, we obtain  $\sup_n\sigma_{r_j}((c_{k,n})_k) \E^{-M(\lambda n)} < \infty$ for all $\lambda > 0$. By applying Lemma \ref{projectivelemma}$(i)$ to the sequence $(\sigma_{r_j}((c_{k,n})_k))_n$, we find a sequence $s_j \in \mathcal{R}$ such that $\sup_n \sigma_{r_j}((c_{k,n})_k) \E^{-M_{s_j}(n)} < \infty$. Conversely, let $(c_{k,n})_{k,n} \in \tilde{s}^{\{M_p\}}_{\mathcal{M}}(\mathbb{Z})$. The definition of $\tilde{s}^{\{M_p\}}_{\mathcal{M}}(\mathbb{Z})$ and Lemma \ref{projectivelemma}$(i)$ imply that for all $\lambda > 0$ and $r_j \in \mathcal{R}$ 
$$
\sup_{n \in \mathbb{N}}\sup_{k \in \mathbb{Z}} |c_{k,n}|\exp\left(M_{r_j}(k)-M(\lambda n)\right) < \infty\;. 
$$
Fix $\lambda > 0$. By applying Lemma \ref{projectivelemma}$(ii)$ to the sequence 
$$
\left(\max_{|k|=l}\sup_{n \in \mathbb{N}}|c_{k,n}|\E^{-M(\lambda n)}\right)_{l \in \mathbb{N}}\;,
$$ 
we find $h>0$ such that $\sup_n\sigma_h((c_{k,n})_k) \E^{-M(\lambda n)} < \infty$. For the second equality, the inclusion $s^{\{M_p\}}_{\mathcal{N}}(\mathbb{Z})  \subseteq  \tilde{s}^{\{M_p\}}_{\mathcal{N}}(\mathbb{Z})$ is clear, whereas the converse inclusion is a consequence of Cor. \ref{nullcharseq} and  Lemma \ref{projectivelemma}$(ii)$.\qed
\end{proof}
\subsection{Generalized Point Values}
In this subsection we introduce the ring of generalized numbers of class $\ast$ in order to view periodic generalized functions of class $\ast$ as objects defined pointwise. We introduce
$$
 \mathbb{C}_\mathcal{M}^{(M_p)} = \{ (z_n)_n \in \mathbb{C}^{\mathbb{N}} \, | \,  \exists \lambda > 0 \sup_{n \in \mathbb{N}}|z_n|\E^{-M(\lambda n)} < \infty\}\;,
$$
$$
 \mathbb{C}_\mathcal{M}^{\{M_p\}} = \{ (z_n)_n \in \mathbb{C}^{\mathbb{N}} \, | \,  \forall \lambda > 0 \sup_{n \in \mathbb{N}}|z_n|\E^{-M(\lambda n)} < \infty\}\;,
 $$
and
$$
\mathbb{C}_\mathcal{N}^{(M_p)} = \{ (z_n)_n \in \mathbb{C}^{\mathbb{N}} \, | \,  \forall \lambda > 0 \sup_{n \in \mathbb{N}}|z_n|\E^{M(\lambda n)} < \infty\}\;,
$$
$$
\mathbb{C}_\mathcal{N}^{\{M_p\}} =\{ (z_n)_n \in \mathbb{C}^{\mathbb{N}} \, | \,  \exists \lambda > 0 \sup_{n \in \mathbb{N}}|z_n|\E^{M(\lambda n)} < \infty\}\;.
$$
Clearly $\mathbb{C}_\mathcal{N}^{\ast}$ is an ideal in the ring $\mathbb{C}_\mathcal{M}^{\ast}$. The ring of \emph{generalized numbers of class $\ast$} is defined as the factor ring
$$ 
\widetilde{\mathbb{C}}^\ast = \mathbb{C}_\mathcal{M}^{\ast}/\mathbb{C}_\mathcal{N}^{\ast}. 
$$
Observe that $\widetilde{\mathbb{C}}^\ast$ is not a field. In fact, this follows from  the same examples used for the ring of Colombeau generalized numbers \cite[Ex.\ 1.2.33, p.\ 32]{GGK}. Furthermore, the elements of $\mathbb{C}$ are canonically embedded into $\widetilde{\mathbb{C}}^\ast$ via the constant embedding
$$
\sigma(z) := [(z)_n]\;, \qquad z \in  \mathbb{C}\;.
 $$
Likewise, one can define the subring $\widetilde{\mathbb{R}}^\ast$. Let $f = [(f_n)_n] \in \mathcal{G}_{2\pi}^\ast$ and $t = [(t_n)_n] \in \widetilde{\mathbb{R}}^\ast$. The point value of $f$ at $t$ is defined as $f(t) := [(f_n(t_n))_n]$. The point value $f(t)$ does not depend on the representative of $f$ nor on the representative of $t$; the former is clear while the latter follows from the mean value theorem.The induced pointwise defined mapping $f:\widetilde{\mathbb{R}}^\ast \to \widetilde{\mathbb{C}}^\ast: t \to f(t)$ is $2\pi$-periodic, that is,
$$
f(t+ \sigma(2\pi)) = f(t)\;, \qquad t \in \widetilde{\mathbb{R}}^\ast\;.
$$
The next proposition shows that every $f \in  \mathcal{G}_{2\pi}^\ast$ can be associated with the mapping $f:\widetilde{\mathbb{R}}^\ast \to \widetilde{\mathbb{C}}^\ast$ in a one-to-one fashion. We define 
$$
\widetilde{[0,2\pi]}^\ast = \{ t \in \widetilde{\mathbb{R}}^\ast \, | \, \exists (t_n)_n \in \mathbb{C}^\ast_{\mathcal{M}} \mbox{ such that } t = [(t_n)_n] \mbox{ and } t_n \in [0,2\pi], n \in \mathbb{N} \}.
$$
\begin{proposition}
Let $f \in  \mathcal{G}_{2\pi}^\ast$. Then, $f = 0$ in $\mathcal{G}_{2\pi}^\ast$ if and only if $f(t) = 0$ in $\widetilde{\mathbb{C}}^\ast$ for all $t \in \widetilde{[0,2\pi]}^\ast$.
\end{proposition}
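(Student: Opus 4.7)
The plan is to reduce the statement to the null characterization of the ideal of negligible sequences provided by Prop.~\ref{nullchar}. Both directions will come out almost immediately once the point $t \in \widetilde{[0,2\pi]}^\ast$ is chosen appropriately; no new machinery is needed.

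For the easy direction, I would fix a representative $(f_n)_n \in \mathcal{E}_{2\pi,\mathcal{N}}^\ast$ of $f$. For any $t = [(t_n)_n] \in \widetilde{[0,2\pi]}^\ast$ one has the trivial pointwise bound
$$
|f_n(t_n)| \leq \|f_n\|_{L^\infty(\mathbb{R})}, \qquad n \in \mathbb{N}.
$$
Applying Prop.~\ref{nullchar} to $(f_n)_n$ then gives, for every $\lambda > 0$ in the Beurling case (resp.\ for some $\lambda > 0$ in the Roumieu case), $\sup_n |f_n(t_n)|\E^{M(\lambda n)} < \infty$. Hence $(f_n(t_n))_n \in \mathbb{C}_\mathcal{N}^\ast$, so $f(t)=0$ in $\widetilde{\mathbb{C}}^\ast$.

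For the nontrivial direction, the idea is to witness the sup-norm of each representative by a single element of $\widetilde{[0,2\pi]}^\ast$. Fix a representative $(f_n)_n \in \mathcal{E}_{2\pi,\mathcal{M}}^\ast$ of $f$. Since each $f_n$ is continuous and $2\pi$-periodic, compactness of $[0,2\pi]$ lets us choose $t_n \in [0,2\pi]$ with
$$
|f_n(t_n)| = \|f_n\|_{L^\infty([0,2\pi])} = \|f_n\|_{L^\infty(\mathbb{R})}.
$$
The sequence $(t_n)_n$ is bounded by $2\pi$, so it trivially belongs to $\mathbb{C}^\ast_{\mathcal{M}}$ (the sup-norm estimate is satisfied for \emph{every} $\lambda > 0$), and consequently $t := [(t_n)_n] \in \widetilde{[0,2\pi]}^\ast$. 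By the hypothesis, $f(t) = 0$ in $\widetilde{\mathbb{C}}^\ast$, i.e.\ $(f_n(t_n))_n \in \mathbb{C}_\mathcal{N}^\ast$. By our choice of $t_n$ this reads
$$
\sup_{n \in \mathbb{N}} \|f_n\|_{L^\infty(\mathbb{R})} \E^{M(\lambda n)} < \infty
$$
for all $\lambda > 0$ (resp.\ for some $\lambda > 0$), which is exactly the hypothesis of the null characterization Prop.~\ref{nullchar}. That proposition then gives $(f_n)_n \in \mathcal{E}_{2\pi,\mathcal{N}}^\ast$, so $f = 0$ in $\mathcal{G}_{2\pi}^\ast$, as required.

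There is no real obstacle; the only subtle point is ensuring that $(t_n)_n$ with $t_n \in [0,2\pi]$ genuinely represents an element of $\widetilde{[0,2\pi]}^\ast$ in both the Beurling and the Roumieu case, and that the evaluation $[(f_n(t_n))_n]$ is well defined in $\widetilde{\mathbb{C}}^\ast$ — both facts are either immediate from boundedness of $(t_n)_n$ or already established in the paragraph preceding the proposition.
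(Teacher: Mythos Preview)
Your proof is correct. Both you and the paper reduce the converse direction to the null characterization Prop.~\ref{nullchar}, but the arguments are organized differently. The paper argues by contraposition: assuming $(f_n)_n \notin \mathcal{E}_{2\pi,\mathcal{N}}^\ast$, it uses the failure of the sup-norm bound in Prop.~\ref{nullchar} to extract a subsequence $m_n \nearrow \infty$ and points $t_n \in [0,2\pi]$ with $|f_{m_n}(t_n)|$ large, and then pads this to a full sequence $(t'_l)_l$ to produce a generalized point $t'$ with $f(t') \neq 0$. Your argument is direct: you pick, for every $n$, a point $t_n$ realizing $\|f_n\|_{L^\infty}$, so that the hypothesis $f(t)=0$ for $t=[(t_n)_n]$ immediately yields the sup-norm bound needed for Prop.~\ref{nullchar}. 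Your route avoids the subsequence-and-padding step and is slightly cleaner; the paper's route, on the other hand, makes explicit the mechanism by which a single bad generalized point is manufactured from a violation of negligibility.
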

\begin{proof}
The direct implication is clear. Conversely, suppose that $f = [(f_n)_n] \neq 0$ in $\mathcal{G}_{2\pi}^\ast$. By Prop. \ref{nullchar} there are $\lambda > 0$, $m_n \nearrow \infty$ and $t_n \in [0,2\pi]$ ($m_n \nearrow \infty$ and $t_n \in [0,2\pi]$) such that
$$
|f_{m_n}(t_n)| \geq n\E^{-M(\lambda m_n)} \qquad ( |f_{m_n}(t_n)| \geq n \E^{-M(m_n/n)})\;, \qquad  n \in \mathbb{N}\;. 
$$
Define $t'_l = t_{n}$ if $l = m_n$ for some $n \in \mathbb{N}$ and as $0$ otherwise. For $t' = [(t'_l)_l] \in \widetilde{[0,2\pi]}^\ast$ we have $f(t') \neq 0$ in $\mathbb{C}^\ast$.\qed
\end{proof}
\section{Embedding of Periodic Ultradistributions} \label{section-embedding}
In this section we embed the space of periodic ultradistributions of class $\ast$ into the algebra $\mathcal{G}_{2\pi}^\ast$ in such a way that the multiplication of periodic ultradifferentiable functions of class $\ast$ is preserved. As usual, we accomplish this by means of convolution with a suitable mollifier sequence. Let us start with defining the type of mollifier sequences that will be employed in our embedding. 
\begin{definition}\label{mollifier}
A sequence $(\varphi_n)_{n \in \mathbb{N}}$ of smooth $2\pi$-periodic functions is called a \emph{mollifier sequence} if the Fourier coefficients $\widehat{\varphi}_n(k) =c_{k,n}$, $k \in \mathbb{Z}$, $n \in \mathbb{N}$, satisfy the following conditions: 
\begin{itemize}
\item There is $C>0$ such that 
$$
|c_{k,n}| \leq C\;, \qquad k \in \mathbb{Z}\;, n \in \mathbb{N}\;.
$$
\item There is $R >0$ such that for all $n \in \mathbb{N}$
$$
c_{k,n} = 0\;, \qquad |k| \geq Rn\;.
$$

\item There is $r >0$ such that for all $n \in \mathbb{N}$
$$
c_{k,n} = \frac{1}{2\pi}\;, \qquad |k| \leq rn\;.
$$
\end{itemize}
\end{definition}
\begin{example} Define 
$$
\varphi_n(t) = \frac{1}{2\pi}\sum_{|k| \leq n} \E^{\imag kt}\;, \qquad n \in \mathbb{N}\;,
$$
then $(\varphi_n)_n$ is clearly a mollifier sequence. It is used in \cite{Valmorin} to embed the space of periodic hyperfunctions into some Colombeau type algebra.
 \end{example}

\begin{example} Let $\psi$ be a compactly supported continuous function on $\mathbb{R}$ such that $\psi \equiv 1/(2\pi)$ in a neighbourhood of the origin. Let $\psi_n = \psi (\cdot / n)$ and define
$$
\varphi_n(t) = \sum_{k \in \mathbb{Z}}\int_\mathbb{R} \psi_n(\xi)\E^{\imag \xi(t+ 2\pi k)}\: \D \xi\;, \qquad n \in \mathbb{N}\;,
$$
then $\varphi_n$ is a smooth $2\pi$-periodic function with $\widehat{\varphi}_n(k) = \psi(k/n)$ for all $k \in \mathbb{Z}$, hence $(\varphi_n)_n$ is a mollifier sequence. These kind of mollifier sequences are closely related to the mollifiers usually used to embed the space of compactly supported (ultra)distributions into some Colombeau type algebra (see the discussions at the beginning of \cite[Sect.\ 1.2.2]{GGK} and \cite[Sect.\ 5]{DVV}). 
\end{example}
\begin{theorem}\label{main}
Let $(\varphi_n)_n$ be a mollifier sequence. Then 
$$
\iota: \mathcal{E}_{2\pi}'^\ast \to \mathcal{G}_{2\pi}^\ast: f \rightarrow [(f \ast \varphi_n)_n]
$$
is a linear embedding that satisfies the following properties:
\begin{enumerate}
\item[] $(i)$\, $\iota$ commutes with ultradifferential operators of class $\ast$, that is, for all ultra-differential operators $P(D)$ of class $\ast$
$$
P(D)\iota(f) = \iota(P(D)f)\;, \qquad f \in \mathcal{G}_{2\pi}^\ast\;.
$$
\item[] $(ii)$\, $\iota_{|\mathcal{E}_{2\pi}^\ast}$ coincides with the constant embedding  $\sigma$. Consequently, 
$$
\iota(fg) =\iota(f)\iota(g)\;, \qquad f,g \in \mathcal{E}_{2\pi}^\ast\;.
$$
\end{enumerate}
\end{theorem}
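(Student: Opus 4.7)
The plan is to reduce every claim to the Fourier-coefficient level via Lemma~\ref{coefficients4} and the identity $\widehat{f\ast\varphi_n}(k) = 2\pi\widehat{f}(k)c_{k,n}$, so that the interaction between the growth/decay of $(\widehat{f}(k))_k$ and the three structural properties of the mollifier (uniform boundedness of the $c_{k,n}$, compact support in $|k|<Rn$, and the normalization $c_{k,n}=1/(2\pi)$ for $|k|\leq rn$) can be exploited directly.

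First I would verify well-definedness, i.e.\ that the double sequence $(\widehat{f\ast\varphi_n}(k))_{k,n}$ lies in $s^\ast_{\mathcal{M}}(\mathbb{Z})$. The support condition kills the coefficients for $|k|\geq Rn$, while on $|k|<Rn$ the tempered growth of $(\widehat{f}(k))_k$ provided by Prop.~\ref{coefficients2} yields $|\widehat{f}(k)|\leq C'\E^{M(\lambda' k)}$ (for some $\lambda'>0$ in the Beurling case, for every $\lambda'>0$ in the Roumieu case). Using the monotonicity of $M$ together with Lemma~\ref{assumptionM2}, one estimates, for $|k|<Rn$,
\[
|\widehat{f\ast\varphi_n}(k)|\E^{M(hk)} \leq 2\pi CC'\E^{M(\lambda' k) + M(hk)} \leq 2\pi CC'A\,\E^{M(H\max(\lambda',h)Rn)},
\]
and a suitable choice of $h$ and $\lambda'$ matches the required quantifier pattern in each of the two cases. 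Injectivity then follows from the null characterization Cor.~\ref{nullcharseq}: if $(f\ast\varphi_n)_n\in\mathcal{E}^\ast_{2\pi,\mathcal{N}}$, there is a constant $C$ and a suitable $\lambda>0$ with $|\widehat{f}(k)c_{k,n}|\E^{M(\lambda n)}\leq C$; since $c_{k,n}=1/(2\pi)$ for $|k|\leq rn$, this forces $|\widehat{f}(k)|\leq 2\pi C\E^{-M(\lambda n)}$ for every $n\geq |k|/r$, and letting $n\to\infty$ gives $\widehat{f}(k)=0$ for all $k\in\mathbb{Z}$, whence $f=0$ by Prop.~\ref{coefficients2}.

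Property (i) is then immediate, since both $P(D)(f\ast\varphi_n)$ and $(P(D)f)\ast\varphi_n$ have $k$-th Fourier coefficient equal to $2\pi P(k)\widehat{f}(k)c_{k,n}$. For property (ii) it suffices to show $(f-f\ast\varphi_n)_n\in\mathcal{E}^\ast_{2\pi,\mathcal{N}}$ for every $f\in\mathcal{E}^\ast_{2\pi}$, because $\sigma$ is an algebra homomorphism and the multiplicative statement then follows automatically from $\iota=\sigma$ on $\mathcal{E}^\ast_{2\pi}$. The Fourier coefficients of $f-f\ast\varphi_n$ equal $\widehat{f}(k)(1-2\pi c_{k,n})$, which vanish for $|k|\leq rn$ and are bounded in modulus by $(1+2\pi C)|\widehat{f}(k)|$ for $|k|>rn$. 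Using the decay of $(\widehat{f}(k))_k$ provided by Prop.~\ref{coefficients1} (for every $h>0$ in the Beurling case, for some $h>0$ in the Roumieu case), together with $M(hk)\geq M(hrn)$ on the remaining support, one matches $M(hrn)$ against $M(\lambda n)$ with the correct quantifier pattern and invokes Cor.~\ref{nullcharseq}.

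The main obstacle I anticipate is the careful bookkeeping of quantifiers in the Roumieu case, where the definitions of $\mathcal{E}^{\{M_p\}}_{2\pi,\mathcal{M}}$ and $\mathcal{E}^{\{M_p\}}_{2\pi,\mathcal{N}}$ invert the roles of $\lambda$ and $h$ relative to the Beurling case (cf.\ Remark~\ref{def-Roumieu}); as the author emphasizes, this peculiar order of quantifiers is precisely what makes the identity $\iota_{|\mathcal{E}^\ast_{2\pi}}=\sigma$ hold, so chasing these quantifiers through the estimates above, while repeatedly absorbing expressions of the form $M(\alpha k)+M(\beta k)$ via Lemma~\ref{assumptionM2}, will be the technically most delicate part of the argument.
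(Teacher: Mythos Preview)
Your proposal is correct and follows essentially the same route as the paper: reduce everything to Fourier coefficients via Lemma~\ref{coefficients4}, then exploit the three defining properties of the mollifier together with Lemma~\ref{assumptionM2} to obtain the required moderateness and negligibility estimates, with property~(i) being immediate and property~(ii) following from the vanishing of $1-2\pi c_{k,n}$ for $|k|\leq rn$.

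The only noteworthy deviation is in the injectivity step. The paper dispatches this in one line by observing that $\varphi_n\to\delta$ in $\mathcal{E}_{2\pi}'^\ast$ (so $f\ast\varphi_n\to f$, while negligibility forces $f\ast\varphi_n\to 0$). You instead argue directly at the coefficient level: from $(f\ast\varphi_n)_n\in\mathcal{E}_{2\pi,\mathcal{N}}^\ast$ and the normalization $c_{k,n}=1/(2\pi)$ for $|k|\leq rn$ you extract $|\widehat f(k)|\leq 2\pi C\E^{-M(\lambda n)}$ for all $n\geq|k|/r$, and conclude $\widehat f(k)=0$ by letting $n\to\infty$. Both arguments are short and valid; yours has the minor advantage of staying entirely within the Fourier-coefficient framework you have already set up, while the paper's is the more conceptual one-liner.
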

\begin{proof}
We use the same notation as in Def. \ref{mollifier}. Let $A,H$ be the constants occurring in $(M.2)$. We start by proving that $(f \ast \varphi_n)_n \in \mathcal{E}_{2\pi,\mathcal{M}}^\ast$. By Lemma \ref{coefficients4} it suffices to show that  $(\widehat{f}(k)c_{k,n})_{k,n} \in s^\ast_\mathcal{M}(\mathbb{Z})$. Proposition \ref{coefficients2} implies that for some $\lambda >0$ (every $\lambda >0$) $K = \sigma'_\lambda((\widehat{f}(k))_k) < \infty$. Hence for $h>0$ it holds that
\begin{align*}
\sigma_h((\widehat{f}(k)c_{k,n})_{k}) &= \sup_{k \in \mathbb{Z}} |\widehat{f}(k)||c_{k,n}|\E^{M(hk)} \\ 
&\leq CK\sup_{|k| \leq Rn}\exp\left(M(\lambda k)+M(hk)\right) \\
&\leq ACK\E^{M(\mu n)}\;, \quad n \in \mathbb{N}\;,
\end{align*}
where $\mu = HR\max(\lambda, h)$. This shows that $(f \ast \varphi_n)_n \in \mathcal{E}_{2\pi,\mathcal{M}}^\ast$ both in the Beurling and Roumieu case. The injectivity of $\iota$ follows from the fact that $\varphi_n \to \delta$ in $\mathcal{E}_{2\pi}'^\ast$ as $n \to \infty$. Property $(i)$ is clear. Finally, we show $(ii)$. Let $f \in \mathcal{E}_{2\pi}^\ast$. By Lemma \ref{coefficients4} and Prop. \ref{nullchar} it suffices to show that  
$$
\sup_{n \in \mathbb{N}}\sup_{k \in \mathbb{Z}}|\widehat{f}(k)| |1-2\pi c_{k,n}|\E^{M(\lambda n)} < \infty\;,
$$
 for all $\lambda > 0$ (for some $\lambda > 0$). Proposition \ref{coefficients1} implies that for every $\lambda >0$ (some $\lambda >0$) $K = \sigma_{\lambda}((\widehat{f}(k))_k) < \infty$. Hence 
\begin{align*}
|\widehat{f}(k)||1-2\pi c_{k,n}| & \leq K|1-2\pi c_{k,n}|\E^{-M(\lambda k)} \\ 
&\leq (1+2\pi C)K\E^{-M(\lambda rn)}\;, \quad n \in \mathbb{N}\;, k \in \mathbb{Z}\;.
\end{align*}
\qed
\end{proof}
\begin{remark}
It is clear that the embedding $\iota: \mathcal{E}_{2\pi}'^\ast \to \mathcal{G}_{2\pi}^\ast$ satisfies the properties $(i)$--$(iii)$ from Sect. \ref{section-impossibility} with $\dagger = \ast$. Hence, it is optimal in the sense discussed there.
\end{remark}
\section{Regular Periodic Generalized Functions \newline of Class $(M_p)$ and $\{M_p\}$}\label{section-regular}
In this section we introduce a notion of regularity (with respect to ultradifferentiability of class $\ast$) in $\mathcal{G}_{2\pi}^{\ast}$. The definition given below is based on the regular algebra $\mathcal{G}^{\infty}$ frequently used in classical Colombeau theory \cite{Ober}. Throughout this section we fix a mollifier sequence $(\varphi_n)_n$ with $r = 1$ in Def. \ref{mollifier} and consider its associated embedding $\iota$. We define the algebra of \emph{regular} periodic generalized functions of class $\ast$ as
$$
\mathcal{G}_{2\pi}^{\ast,\infty} = \mathcal{E}_{2\pi,\mathcal{M}}^{\ast, \infty}/\mathcal{E}_{2\pi,\mathcal{N}}^\ast\;, 
$$
where
\begin{align*}
\mathcal{E}_{2\pi,\mathcal{M}}^{(M_p), \infty} = \{ (f_n)_n \in {\mathcal{E}_{2\pi}^{(M_p)}}^{\mathbb{N}} \, | \,   &\exists \lambda > 0,\forall h > 0 \\
&\sup_{n \in \mathbb{N}}\|f_n\|_h \E^{-M(\lambda n)} < \infty\}\;,
\end{align*}
\begin{align*}
\mathcal{E}_{2\pi,\mathcal{M}}^{\{M_p\}, \infty} = \{ (f_n)_n \in {\mathcal{E}_{2\pi}^{\{M_p\}}}^{\mathbb{N}} \, | \,  &\exists h > 0,\forall \lambda > 0 \\
&\sup_{n \in \mathbb{N}}\|f_n\|_h \E^{-M(\lambda n)} < \infty\}\;.
\end{align*}
\begin{lemma}\label{lemmareg}
Let $f \in \mathcal{E}_{2\pi}'^\ast$ and suppose $(f_n)_n \in \mathcal{E}_{2\pi,\mathcal{M}}^{\ast}$ is a representative of $\iota(f)$. Then,
$$
\sup_{n \in \mathbb{N}}\sigma'_{H\lambda}((\widehat{f}(k)-\widehat{f}_n(k))_k) \E^{M(\lambda  n)} < \infty\;,
$$
for some $\lambda > 0$, where $H$ denotes the constant occurring in $(M.2)$.
\end{lemma}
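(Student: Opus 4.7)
The plan is to split the Fourier difference into the ``mollification error'' and the ``negligible remainder.'' Writing $c_{k,n} = \widehat{\varphi}_n(k)$, we have $\widehat{f\ast\varphi_n}(k) = 2\pi \widehat{f}(k) c_{k,n}$, so
\begin{equation*}
\widehat{f}(k) - \widehat{f}_n(k) \; = \; \widehat{f}(k)\bigl(1 - 2\pi c_{k,n}\bigr) \; + \; \bigl(\widehat{f\ast\varphi_n}(k) - \widehat{f}_n(k)\bigr).
\end{equation*}
Since $(f_n - f\ast\varphi_n)_n \in \mathcal{E}_{2\pi,\mathcal{N}}^\ast$ (as $(f_n)_n$ is a representative of $\iota(f) = [(f\ast\varphi_n)_n]$), Lemma \ref{coefficients4} will translate the second term to the Fourier side. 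I then bound each summand separately and combine them by choosing a single admissible $\lambda$.

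For the mollification error, Prop. \ref{coefficients2} gives $|\widehat{f}(k)| \leq C_1 \E^{M(\lambda_0 k)}$, where $\lambda_0 > 0$ is some fixed constant in the Beurling case and an arbitrary positive number in the Roumieu case. Because the fixed mollifier satisfies $r = 1$ in Def. \ref{mollifier}, the factor $1 - 2\pi c_{k,n}$ vanishes whenever $|k| \leq n$; for $|k| > n$, the uniform bound $|c_{k,n}| \leq C$ gives $|1 - 2\pi c_{k,n}| \leq 1 + 2\pi C$. In this range we also have $M(\lambda_0 n) \leq M(\lambda_0 k)$, so Lemma \ref{assumptionM2} yields
\begin{equation*}
\bigl|\widehat{f}(k)(1 - 2\pi c_{k,n})\bigr|\, \E^{M(\lambda_0 n) - M(H\lambda_0 k)} \;\leq\; (1 + 2\pi C)\, C_1 \,\E^{2M(\lambda_0 k) - M(H\lambda_0 k)} \;\leq\; A(1 + 2\pi C)\, C_1,
\end{equation*}
uniformly in $k$ and $n$.

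For the negligible remainder, Lemma \ref{coefficients4} applied to $(f_n - f\ast\varphi_n)_n \in \mathcal{E}_{2\pi,\mathcal{N}}^\ast$ produces $h_1, \lambda_1 > 0$ and $C_2 > 0$ (the choice being free in both parameters in the Beurling case, and merely existential in the Roumieu case) with
\begin{equation*}
\bigl|\widehat{f\ast\varphi_n}(k) - \widehat{f}_n(k)\bigr| \;\leq\; C_2\, \E^{-M(h_1 k) - M(\lambda_1 n)}.
\end{equation*}
Since $M \geq 0$, for any $\lambda \leq \lambda_1$ we immediately get $\bigl|\widehat{f\ast\varphi_n}(k) - \widehat{f}_n(k)\bigr|\, \E^{M(\lambda n) - M(H\lambda k)} \leq C_2$.

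To conclude, we match the two bounds with a common $\lambda$. In the Beurling case, take $\lambda := \lambda_0$ from the first step and use the freedom in $\lambda_1$ to take $\lambda_1 \geq \lambda_0$; in the Roumieu case, take $\lambda := \lambda_1$ from the negligibility and use the Roumieu freedom in $\lambda_0$ to set $\lambda_0 := \lambda_1$. In either scenario the two estimates combine to give the required $\sup_n \sigma'_{H\lambda}\bigl((\widehat{f}(k) - \widehat{f}_n(k))_k\bigr) \E^{M(\lambda n)} < \infty$. The main point to watch is the reconciliation of quantifiers between the two cases, which is not so much an obstacle as it is a bookkeeping check; the crucial input is that the condition $r = 1$ on the mollifier kills the mollification error precisely on the range $|k| \leq n$, which is what allows the weight $\E^{M(\lambda n)}$ to be absorbed by powers of $\E^{M(\lambda k)}$ via Lemma \ref{assumptionM2}.
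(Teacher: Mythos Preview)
Your proof is correct and follows essentially the same route as the paper. The only difference is organizational: the paper first observes that the assertion is independent of the chosen representative (which amounts exactly to your ``negligible remainder'' estimate) and hence reduces immediately to $f_n = f\ast\varphi_n$, whereas you keep a general representative and split $\widehat{f}(k)-\widehat{f}_n(k)$ into the mollification error plus the negligible remainder, bounding each piece separately; the computation for the mollification error and the quantifier reconciliation are identical in both treatments.
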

\begin{proof}
The assertion is independent of the chosen representative and therefore we may assume that $f_n = f \ast \varphi_n$, $n \in \mathbb{N}$. Proposition \ref{coefficients2} implies that $K = \sigma'_{\lambda}((\widehat{f}(k))_k) < \infty$ for some $\lambda >0$. Hence
\begin{align*}
\sigma'_{H\lambda}((\widehat{f}(k)(1-2\pi c_{k,n}))_{k}) &= \sup_{k \in \mathbb{Z}} |\widehat{f}(k)||1-2\pi c_{k,n}|\E^{-M(H\lambda k)} \\ 
&\leq (1+2\pi C)K\sup_{|k| \geq n}\exp \left(M(\lambda k)-M(H\lambda k)\right) \\
&\leq A(1+2\pi C)K\E^{- M(\lambda n)}, \quad n \in \mathbb{N}\;. 
\end{align*}
  \qed
\end{proof}
The next regularity theorem shows that the notion of $\mathcal{G}_{2\pi}^{\ast,\infty}$-regularity coincides with ultradifferentiability of class $\ast$ when restricted to the image of $\mathcal{E}_{2\pi}'^\ast$ under the embedding $\iota$.
\begin{theorem}\label{mainreg} We have
$$
\mathcal{G}_{2\pi}^{\ast,\infty} \cap \iota(\mathcal{E}_{2\pi}'^\ast) =  \iota(\mathcal{E}_{2\pi}^\ast)\;.
$$
\end{theorem}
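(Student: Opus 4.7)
My plan is to handle the inclusion $\iota(\mathcal{E}_{2\pi}^\ast) \subseteq \mathcal{G}_{2\pi}^{\ast,\infty} \cap \iota(\mathcal{E}_{2\pi}'^\ast)$ by inspection: given $g \in \mathcal{E}_{2\pi}^\ast$, Thm.~\ref{main}$(ii)$ yields $\iota(g) = \sigma(g) = [(g)_n]$, and the constant sequence trivially belongs to $\mathcal{E}_{2\pi,\mathcal{M}}^{\ast,\infty}$ in both Beurling and Roumieu cases since $\|g\|_h < \infty$ (for every $h$ resp.\ some $h$) and the factor $\E^{-M(\lambda n)} \leq 1$ gives no extra growth.

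For the reverse inclusion, let $f \in \mathcal{E}_{2\pi}'^\ast$ with $\iota(f) \in \mathcal{G}_{2\pi}^{\ast,\infty}$. A direct comparison of the definitions of negligible and regular moderate sequences shows $\mathcal{E}_{2\pi,\mathcal{N}}^\ast \subseteq \mathcal{E}_{2\pi,\mathcal{M}}^{\ast,\infty}$, so the canonical representative $(f \ast \varphi_n)_n$ itself lies in $\mathcal{E}_{2\pi,\mathcal{M}}^{\ast,\infty}$. I would then exploit the low-frequency identity built into the mollifier definition: with $r = 1$ as fixed throughout this section, $c_{k,n} = 1/(2\pi)$ for $|k| \leq n$ forces $\widehat{f \ast \varphi_n}(k) = \widehat{f}(k)$ for all such $k$. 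Evaluating at $n = |k|$ (absorbing small $|k|$ into constants) and combining the standard Fourier bound $|\widehat{f \ast \varphi_n}(k)| \leq \|f \ast \varphi_n\|_h\E^{-M(h|k|)}$ (cf.\ the proof of Prop.~\ref{coefficients1}) with the regular-moderate estimate $\|f \ast \varphi_n\|_h \leq C_h \E^{M(\lambda n)}$ produces
\[
|\widehat{f}(k)| \leq C_h \E^{M(\lambda|k|) - M(h|k|)},
\]
where in the Beurling case $\lambda > 0$ is fixed and $h > 0$ is free, while in the Roumieu case $h > 0$ is fixed and $\lambda > 0$ is free.

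The final step is to absorb $M(\lambda|k|)$ into $M(h|k|)$ using Lemma~\ref{assumptionM2}. For the Beurling case, given any target $\mu > 0$, I would choose $h = H\max(\lambda,\mu)$, so that
\[
M(h|k|) \geq 2M(\max(\lambda,\mu)|k|) - \log A \geq M(\lambda|k|) + M(\mu|k|) - \log A,
\]
yielding $|\widehat{f}(k)| \leq AC_h \E^{-M(\mu|k|)}$ for every $\mu > 0$ and hence $f \in \mathcal{E}_{2\pi}^{(M_p)}$. In the Roumieu case $h$ is fixed, and setting $\lambda = h/H$ gives $M(h|k|) \geq 2M(\lambda|k|) - \log A$, so $|\widehat{f}(k)| \leq C'\E^{-M((h/H)|k|)}$, placing $f$ in $\mathcal{E}_{2\pi}^{\{M_p\}}$. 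The only real subtlety I anticipate is juggling the reversed $\exists/\forall$ structure between the two cases, but in both a single application of the doubling inequality from $(M.2)$ yields exactly the slack needed for the absorption.
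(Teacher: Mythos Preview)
Your argument is correct and is in fact a streamlined version of the paper's proof. The paper works with an arbitrary representative $(f_n)_n$ of $\iota(f)$, decomposes $\widehat{f}(k)=\widehat{f}_n(k)+\widehat{g}_n(k)$ with $g_n=f-f_n$, and invokes Lemma~\ref{lemmareg} to bound the second term; this yields a two-term inequality which is then resolved by choosing $n=m|k|$ with an $m$ tuned from the constants $H,l,\mu$. You instead observe that $\mathcal{E}_{2\pi,\mathcal{N}}^\ast\subseteq\mathcal{E}_{2\pi,\mathcal{M}}^{\ast,\infty}$, so the \emph{canonical} representative $(f\ast\varphi_n)_n$ already lies in $\mathcal{E}_{2\pi,\mathcal{M}}^{\ast,\infty}$, and then use the exact low-frequency identity $\widehat{f\ast\varphi_{|k|}}(k)=\widehat{f}(k)$ (from $r=1$) in place of the error estimate of Lemma~\ref{lemmareg}. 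This collapses the two-term bound to a single term and lets you take $n=|k|$ directly. Both routes finish with the same $(M.2)$ doubling trick via Lemma~\ref{assumptionM2}; yours simply bypasses the auxiliary lemma, while the paper's version has the minor advantage of being phrased representative-independently (though its proof of Lemma~\ref{lemmareg} reduces to exactly the mollifier property you exploit). One cosmetic remark: in the Roumieu case your constant should be written $C_\lambda$ rather than $C_h$, since $h$ is fixed and $\lambda$ varies, but this does not affect the argument.
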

\begin{proof}
The inclusion $\iota(\mathcal{E}_{2\pi}^\ast) \subseteq \mathcal{G}_{2\pi}^{\ast,\infty} \cap \iota(\mathcal{E}_{2\pi}'^\ast)$ is obvious. Conversely, let $f \in \mathcal{E}_{2\pi}'^\ast$ and assume  $\iota(f) \in \mathcal{G}_{2\pi}^{\ast,\infty}$. By Prop. \ref{coefficients1} it suffices to show that $(\widehat{f}(k))_k \in s^\ast(\mathbb{Z})$. Let $(f_n)_n \in \mathcal{E}_{2\pi,\mathcal{M}}^{\ast}$ be a representative of $\iota(f)$ and set $g_n = f -f_n$, $n \in \mathbb{N}$. From the definition of $\mathcal{G}_{2\pi}^{\ast,\infty}$ it follows that 
$$
\exists \lambda > 0,\forall h > 0 \qquad (\exists h > 0,\forall \lambda > 0)  
$$
$$
\sup_{n \in \mathbb{N}}\sigma_h((\widehat{f}_n(k))_k) \E^{-M(\lambda n)} < \infty\;.
$$
Furthermore, Lemma \ref{lemmareg} implies that for some $l >0$
$$
\sup_{n \in \mathbb{N}}\sigma'_{Hl}((\widehat{g}_n(k))_k) \E^{M(l  n)} < \infty\;.
$$
Combining these two facts we obtain that
$$
\exists \lambda,l > 0,\forall h > 0 \qquad (\exists h,l > 0,\forall \lambda > 0)  
$$
\begin{equation}
|\widehat{f}(k)| \leq K\left(\exp\left(M(\lambda n) - M(hk)\right) + \exp\left(M(Hlk) - M(ln)\right)\right),  \quad k \in \mathbb{Z}\;, n \in \mathbb{N}\;,
\label{mainineq}
\end{equation}
where $K$ is a positive constant independent of $k$ and $n$. In the remainder of the proof we treat the Beurling and Roumieu case separately. For a positive real number $x$ we write $\lceil x \rceil$ for the smallest natural number $n$ such that $n \geq x$.

\emph{Beurling case:} Let $\mu > 0$ be arbitrary. Define  $m = \lceil H\max(Hl,\mu)/l \rceil$. Inequality \eqref{mainineq} with $n = m|k|$ and $h = H\max(\lambda m, \mu)$ implies that
$$
|\widehat{f}(k)| \leq 2AK\E^{-M(\mu k)}\;, \qquad k \in \mathbb{Z}\;.
$$

\emph{Roumieu case:} Define  $m = \lceil H^2 \rceil$. Inequality \eqref{mainineq} with $n = m|k|$ and $\lambda = h/(Hm)$ implies that
$$
|\widehat{f}(k)| \leq 2AK\E^{-M(\mu k)}\;, \qquad k \in \mathbb{Z}\;,
$$
where $\mu = \min(h/H,Hl)$.\qed
\end{proof}


\begin{thebibliography}{99.}

\bibitem{Benmeriem} Benmeriem, K., Bouzar, C.: Algebras of generalized ultradistributions. Novi Sad J. Math. \textbf{41}, 53 -- 62 (2011)

\bibitem{DVV} Debrouwere, A., Vernaeve, H., Vindas, J.: Optimal embeddings of ultradistributions into differential algebras, preprint available at \url{http://arxiv.org/abs/1601.03512}

\bibitem{Delcroix2} Delcroix, A., Hasler, M.F., Pilipovi\'c, S.,  Valmorin, V.: Embeddings of ultradistributions and periodic hyperfunctions in Colombeau type algebras through sequence spaces. Math. Proc. Camb. Philos. Soc. \textbf{137}, 697--708 (2004)

\bibitem{Delcroix} Delcroix, A., Hasler, M.F., Pilipovi\'c, S.,  Valmorin, V.: Sequence spaces with exponents weights. Realizations of Colombeau type algebras. Diss. Math. \textbf{447}, 56 pp. (2007)

\bibitem{Colombeau84} Colombeau, J.F.: New generalized functions and multiplication of distributions, North-Holland Publishing Co., Amsterdam, (1984)

\bibitem{Colombeau85} Colombeau, J.F.: Elementary introduction to new generalized functions, North-Holland Publishing Co, Amsterdam, (1985)

\bibitem{GGK} Grosser, M., Kunzinger, M., Oberguggenberger, M., Steinbauer, M.: Geometric theory of generalized functions, Kluwer Academic Publishers, Dordrecht  (2001)

\bibitem{Gorbagcuk} Gorbachuk, V.I.: On Fourier series of periodic ultradistributions, Ukranian Math. J.,  \textbf{34},144--150 (1982)

\bibitem{Gramchev} Gramchev, T.: Nonlinear maps in spaces of distributions, Math. Z. \textbf{209}, 101--114 (1992)

\bibitem{Komatsu1} Komatsu, H.: Ultradistributions, I. Structure theorems and a characterization. J. Fac. Sci. Univ. Tokyo \textbf{20}, 25--105 (1973)

\bibitem{Komatsu3}  Komatsu, H.: Ultradistributions, III. Vector valued ultradistributions and the theory of kernels. J. Fac. Sci. Univ. Tokyo \textbf{29}, 653--717 (1982)

\bibitem{Kolmogorov} Kolmogorov, A. F.: On inequalities between the upper bounds of successive derivatives of an arbitrary function on an infinite interval, (translation) in: Amer. Math. Soc. Translations, Ser. 1, Vol. 2: Number theory and analysis, pp. 233--243, Amer. Math. Soc., Providence (1962) 

\bibitem{Morimoto} Morimoto, M.: Analytic functionals on the sphere. Amer. Math. Soc., Providence (1998) 

\bibitem{Ober} Oberguggenberger, M.: Multiplication of distributions and applications to partial differential equations.  Longman Scientific  $\&$ Technical, Harlow (1992)

\bibitem{Petzsche} Petzsche, H.J.: Die Nuklearit\"at der Ultradistributionsr\"aume und der Satz vom Kern I. Manuscripta Math. \textbf{24}, 133--171 (1978)

\bibitem{Pilipovic1} Pilipovi\'c, S.: Structural theorems for periodic ultradistributions. Proc. Amer. Math. Soc. \textbf{98}, 261--266 (1986)

\bibitem{Pil94} Pilipovi\'{c}, S.: Characterization of bounded sets in spaces of ultradistributions, Proc. Amer. Math. Soc. \textbf{120}, 1191--1206 (1994)

\bibitem{PilScar}  Pilipovi\'c, S., Scarpal\'ezos, D.: Colombeau generalized ultradistributions, Math. Proc. Camb. Phil. Soc. \textbf{130}, 541--553 (2001)

\bibitem{Prangoski} Prangoski, B.: Laplace transform in spaces of ultradistributions. Filomat \textbf{27}, 747--760 (2013)

\bibitem{Schwartz} Schwartz, L.: Sur l'impossibilit\'{e} de la multiplication des distributions. C. R. Acad. Sci. Paris \textbf{239}, 847--848 (1954)

\bibitem{Takiguchi} Takiguchi, T.: Structure of quasi-analytic ultradistributions, Publ. Res. Inst. Math. Sci. \textbf{43}, 425--442 (2007)

\bibitem{Valmorin} Valmorin, V.: Generalized hyperfunctions on the circle. J. Math. Anal. Appl. \textbf{261}, 1--16 (2001)

\end{thebibliography}
\end{document}